\documentclass[english]{amsart}
\usepackage{amssymb,amsmath}
\usepackage{caption}
\usepackage{float}
\usepackage{graphicx,subfigure,epsfig}
\usepackage{epstopdf}
\usepackage{mathtools}
\usepackage{multicol}
\usepackage{fancybox}
\usepackage[usenames, dvipsnames]{color}
\newtheorem{theorem}{Theorem}[section]
\newtheorem{definition}{Definition}[section]
\newtheorem{lemma}{Lemma}[section]
\newtheorem{remark}{Remark}[section]

\newtheorem{proposition}{Proposition}[section]

\DeclarePairedDelimiter\abs{\lvert}{\rvert}

\begin{document}

\markboth{K.~Kaur, M.~Prabhakar}{Virtual knots and links with unknotting index $(n,m)$}

\title{Virtual knots and links with unknotting index $(n,m)$}

\author{K. Kaur} 
\address{Department of Computational Statistics And Data Analytics, Guru Nanak Dev University, Punjab 143005, India}
\email{kirandeepoffical@gmail.com}

\author{M. Prabhakar} 
\address{Department of Mathematics, Indian Institute of Technology Ropar, Punjab 140001, India}
\email{prabhakar@iitrpr.ac.in}

\begin{abstract}
In \cite{kaur2019unknottingknot}, K. Kaur, S. Kamada {\it et al.} posed a problem of finding a virtual knot, if exists, with an  unknotting index $(n,m)$, where $(n,m)$ is a pair of non-negative integers. In this paper, we address this question by providing infinite families of virtual knots with unknotting indices $(0,m)$ and $(1,0)$, respectively. In general, we establish the existence of infinitely many distinct virtual knot diagrams with unknotting index $(n,m)$, for any pair $(n,m)$ of positive integers. Furthermore, we positively address this question for $k(>1)$-component virtual links positively by providing infinite families of $k(>1)$-component virtual links with unknotting index $(n,m)$, for a given pair of non-negative integers $(n,m)$. 
\end{abstract}
\keywords{Virtual knot and link, span value, $n$-th writhe, unknotting index}
\makeatletter{\renewcommand*{\@makefnmark}{}
\footnotetext{2020 {\it Mathematics Subject Classifications.} 57K12, 57K10}\makeatother}

\maketitle

\section{Introduction}
\noindent Virtual knots were introduced by L. H. Kauffman \cite{kauffman1999virtual} as a natural generalization of classical knots,
and independently by M. Goussarov, M. Polyak and O. Viro \cite{goussarov1998finite}. Various knot invariants from the literature have been naturally extended to virtual knot invariants and, more generally, to virtual link invariants, as well. In recent years, several invariants have been introduced to distinguish two given virtual knots.  These include the index polynomial \cite{im2010index}, two variable polynomials~\cite{kaur2018polynomial}, affine index \cite{kauffman2013affine}, arc shift number and region arc shift number~\cite{gill2019arc, kaur2019arc}, multi-variable polynomial \cite{miyazawa2008}, zero polynomial \cite{ju2016zero}, and $n$-th writhe \cite{SatohTaniguchi2014} of virtual knots. 

\noindent The concept of an unknotting operation holds a significant importance in the knot theory as well as in the virtual knot theory to define an  unknotting invariant. In literature, very few unknotting invariants are known for virtual knots. Although a virtual knot extends from a knot, not every unknotting operation applicable to knots necessarily applies to virtual knots. While well
known local moves such as crossing change, delta move and sharp move are unknotting operations for classical
knots \cite{kawauchi2012survey}, but for virtual knots, these local moves are not unknotting operations \cite{murakami1985some, murakami1989certain}. One of the unknotting operations for virtual knots is known as virtualization, which is a replacement of classical crossing by virtual crossing. Taking crossing change and virtualization operation together, a new unknotting invariant for virtual knots called the unknotting index has been introduced in \cite{kaur2019unknottingknot}. This work has been further extended to virtual links by K. Kaur {\it et al.} in \cite{kaur2019unknottinglink}. The unknotting index of a virtual knot is a pair of non-negative integers, compared with respect to the dictionary order. This unknotting index is an extension of the usual unknotting number. Similar to usual unknotting number, there is no general mechanism for computing the unknotting index of a given virtual knot/link. This unknotting index is known for some well known families of virtual knots and links, see \cite{kaur2019unknottingknot, kaur2019unknottinglink}. 

\noindent In \cite{kaur2019unknottingknot}, it was proved  that for any non-negative integer $n$ there exists a virtual knot whose unknotting index is $(1, n)$. For any pair $(n,m)$ of non-negative integers, following question has been arised by K. Kaur {\it et al.} in \cite{kaur2019unknottingknot},
\[ \text{\it Find a virtual knot $K$ with~} U(K)=(n,m).\] 
 In this paper, we address this problem by constructing an infinite family of distinct virtual knot diagrams whose unknotting index is $(n,m)$, for any given pair $(n,m)$ of positive integers. Specifically, we establish the existence of infinitely many virtual knots with uknotting indices $(0,m)$ and $(1,0)$ in Theorem~\ref{thm:(0,m)} and Theorem~\ref{thm:(1,0)knot}, respectively. Furthermore, in Theorem~\ref{thm:(n,m)link} and Proposition~\ref{prop1:(n,m)link}, we address this question for virtual links positively by 
 establishing the existence of an infinite family of $k$-component virtual links for any $k>1$, with unknotting index $(n,m)$. However, proving this result for virtual knots (where $k=1$) for any pair $(n,m)$ is generally difficult. We partially resolve this question in Theorem~\ref{thm:(n,m)knots}, where we establish the existence of  an infinite family of distinct virtual knot diagrams with unknotting index $(n,m)$, for any pair $(n,m)$ of positive integers. The virtual knot diagrams in this infinite family are of minimal classical crossings diagrams.  Although it seems that a diagram $D$ constructed in  Theorem~\ref{thm:(n,m)knots} provides the unknotting index of the virtual knot presented by the diagram $D$, but proving this result is challenging due to insufficient lower bounds.\\
   
\noindent This paper is organized as follows: Section~\ref{pre} contains the preliminaries that are required to prove the main results of the paper.  Specifically, we define the unknotting index for virtual knots and revisit the concept of $n$-th writhe for virtual knots. In Section~\ref{main}, we present the main results of this paper. We conclude this paper in Section~\ref{conclusion} by discussing the work accomplished, the difficulties encountered, and potential future extensions of this research.
\section{Preliminaries}
\label{pre}
\noindent A virtual link diagram exhibits two types of crossings: classical crossings and virtual crossings, see Fig.~\ref{figcv}.

\begin{figure}[!ht] 
\centering
\subfigure[Classical crossings.]
{\includegraphics[scale=0.3]{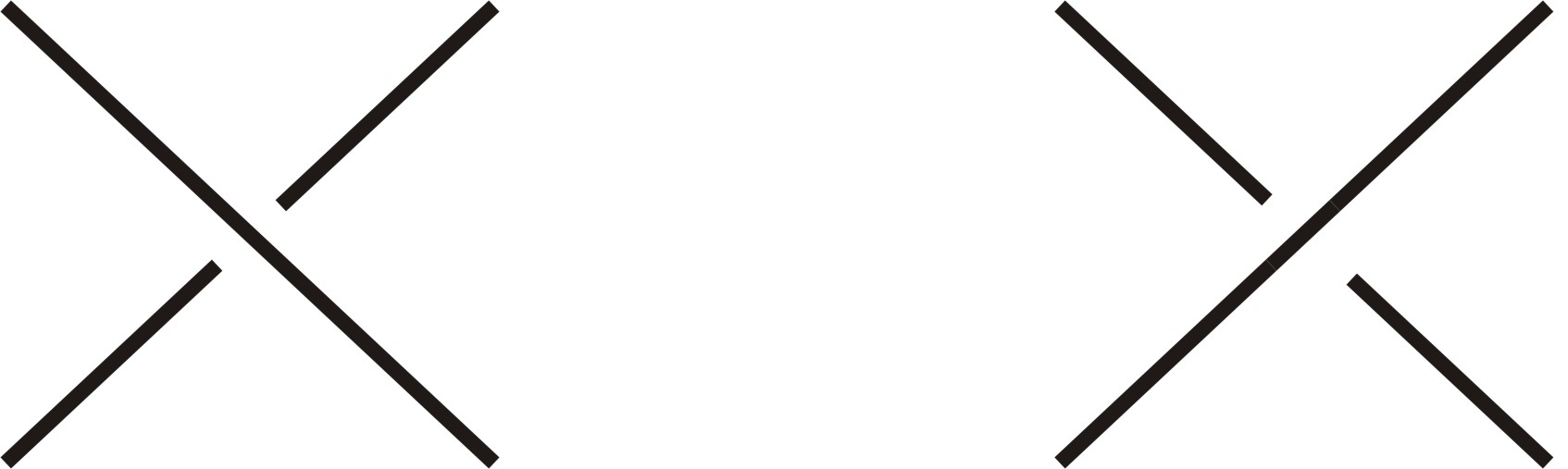} 
} \hspace{.5cm} \subfigure[Virtual crossing.]
{\hspace{1cm}
\includegraphics[scale=.3]{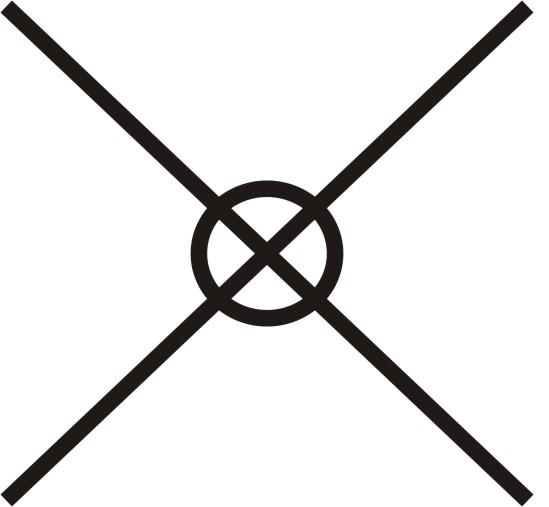}\hspace{1cm}
}
\caption{} \label{figcv}
\end{figure}

\noindent Equivalence between two virtual link diagrams is established by deforming one virtual link diagram to another through a finite sequence of classical Reidemeister moves RI, RII, RIII, as shown in Fig.~\ref{fig1a}, and virtual Reidemeister moves VRI, VRII, VRIII, SV, as shown in Fig.~\ref{fig1b}.

\begin{figure}[!ht] 
\centering
\subfigure[Classical Reidemeister moves.]
{\includegraphics[scale=0.3]{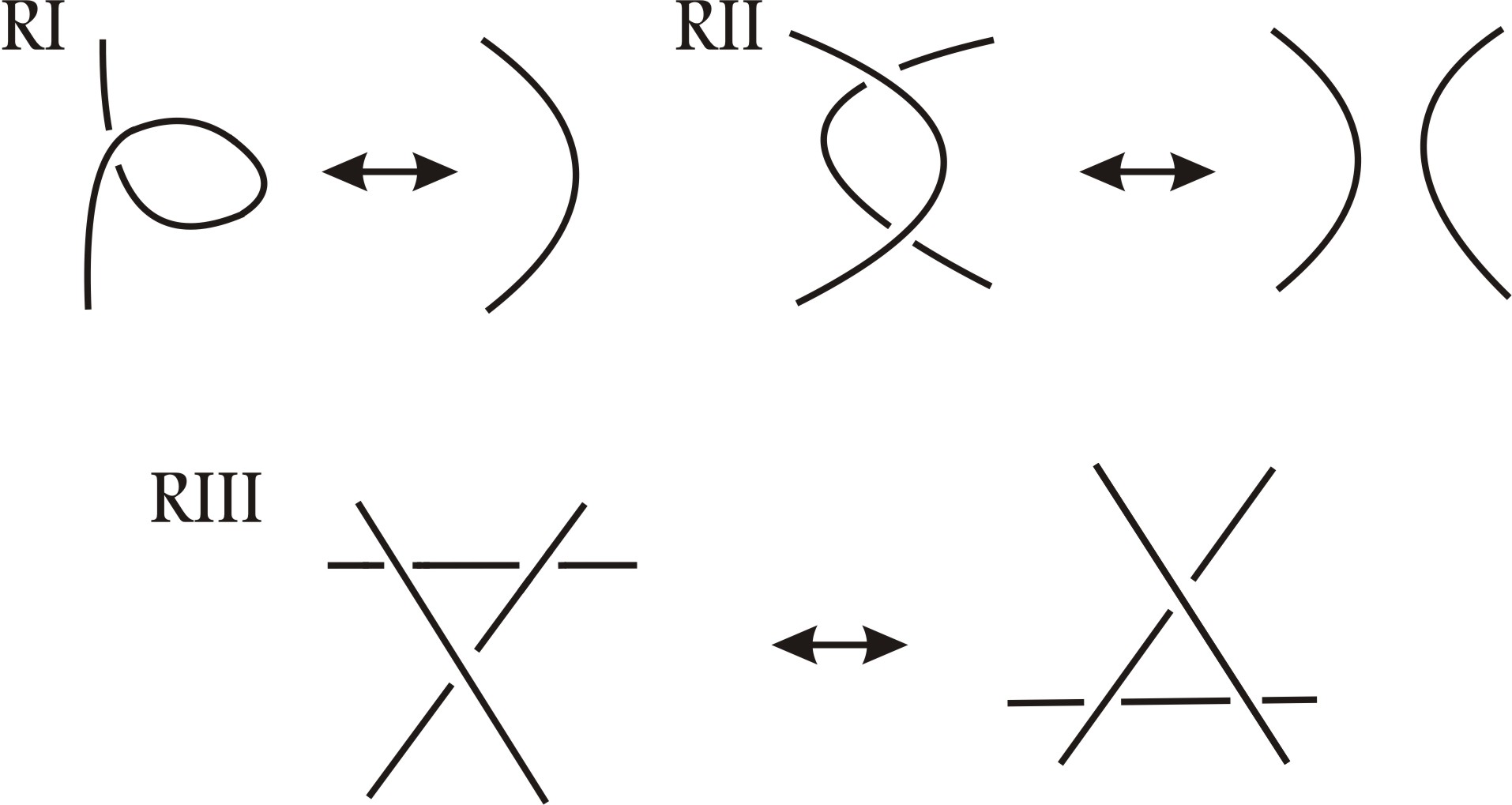} \label{fig1a}
} \hspace{.2cm} \subfigure[Virtual Reidemeister moves.]
{
\includegraphics[scale=.3]{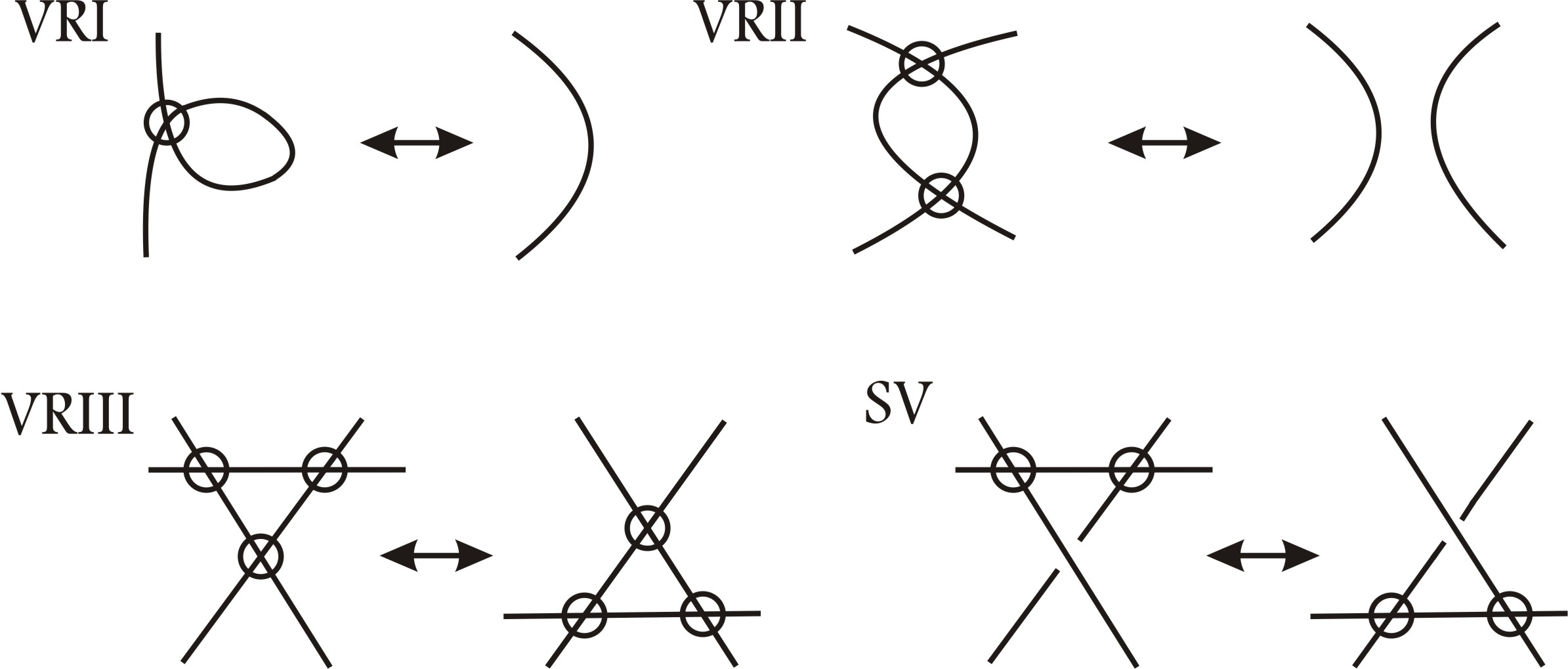} \label{fig1b}
}
\caption{Reidemeister moves.} \label{fig-rei}
\end{figure}

\noindent A diagram $D$ of a virtual knot $K$ is said to be $(n, m)$-unknottable if by virtualizing $n$ classical crossings and crossing change operation at $m$ classical crossings of $D$, the resulting diagram can be deformed into a diagram of the trivial knot. Note that the diagram $D$ is $(|\mathcal{C}(D)|, 0)$-unknottable, where $\mathcal{C}(D)$ is the set of classical crossings of $D$ and $|\mathcal{C}(D)|$ is the cardinality of the set $\mathcal{C}(D)$. \\

\begin{definition}\cite{kaur2019unknottingknot} {\rm 
The {\it unknotting index}  of a diagram $D$, denoted by $U(D)$, is the minimum among all pairs $(n,m)$ such that $D$ is $(n,m)$-unknottable. Here the minimality is taken with respect to the dictionary order.}\end{definition}
 \begin{definition}\cite{kaur2019unknottingknot} {\rm 
The {\it unknotting index} of a virtual knot $ K$, denoted by $U(K)$, is defined as
\[U(K)=\min\{U(D)| D\in [K]\},\]
where the minimality is taken with respect to the dictionary order.}\end{definition}

\noindent The unknotting index for a virtual link is defined similarly, as detailed in \cite[Definition~1.1]{kaur2019unknottinglink}. 
A virtual knot or link $L$ is trivial if and only if $U (L) = (0, 0)$. For a classical link $L$, the unknotting index is $U (L) = (0, u(L))$, where $u(L)$ is the usual unknotting number of $L$. 

\noindent In general, determining the unknotting index for a given virtual knot or link is a challenging task. Lower bounds on this unknotting index have been established  for virtual knots and links in \cite{kaur2019unknottingknot, kaur2019unknottinglink},  using $n$-th writhe invariant \cite{SatohTaniguchi2014}, and span value \cite{cheng2013polynomial, kaur2019unknottinglink}. These lower bounds have been used to compute the unknotting index for some well known families of virtual knots and links, as shown in \cite{kaur2019unknottingknot, kaur2019unknottinglink}.

\noindent Let $D$ be an oriented virtual knot diagram. An \emph{arc} is  an edge between two consecutive classical crossings along the orientation. 
\noindent Now assign an integer value to each arc in $D$ in such a way that the labeling around each crossing point of $D$ follows the rule shown in Fig.~\ref{Fig1fig3}. L.~H.~Kauffman proved \cite[Proposition~4.1]{kauffman2013affine} that such integer labeling, called a \emph{Cheng coloring}, always exists for an oriented virtual knot diagram. 
\begin{figure}[!ht]
\centering 
\unitlength=0.6mm
\begin{picture}(0,30)(0,5)
\thicklines
\qbezier(-70,10)(-70,10)(-50,30)
\qbezier(-70,30)(-70,30)(-62,22) 
\qbezier(-50,10)(-50,10)(-58,18)
\put(-65,25){\vector(-1,1){5}}
\put(-55,25){\vector(1,1){5}}
\put(-53,20){\makebox(0,0)[cc]{$c$}}
\put(-75,34){\makebox(0,0)[cc]{$b+1$}}
\put(-75,8){\makebox(0,0)[cc]{$a$}}
\put(-45,8){\makebox(0,0)[cc]{$b$}}
\put(-45,34){\makebox(0,0)[cc]{$a-1$}}
\qbezier(10,10)(10,10)(-10,30)
\qbezier(10,30)(10,30)(2,22) 
\qbezier(-10,10)(-10,10)(-2,18)
\put(-5,25){\vector(-1,1){5}}
\put(5,25){\vector(1,1){5}}
\put(7,20){\makebox(0,0)[cc]{$c$}}
\put(-15,34){\makebox(0,0)[cc]{$b+1$}}
\put(-15,8){\makebox(0,0)[cc]{$a$}}
\put(15,8){\makebox(0,0)[cc]{$b$}}
\put(15,34){\makebox(0,0)[cc]{$a-1$}}
\qbezier(70,10)(70,10)(50,30)
\qbezier(70,30)(70,30)(50,10) 
\put(55,25){\vector(-1,1){5}}
\put(65,25){\vector(1,1){5}}
\put(60,20){\circle{4}}
\put(45,34){\makebox(0,0)[cc]{$b$}}
\put(45,8){\makebox(0,0)[cc]{$a$}}
\put(75,8){\makebox(0,0)[cc]{$b$}}
\put(75,34){\makebox(0,0)[cc]{$a$}}
\end{picture}
\vspace{.5cm}
\caption{Labeling around crossing} \label{Fig1fig3}
\end{figure}

\noindent After labeling, assign a weight $W_{D}(c)$ \cite{kauffman2013affine}, to each classical crossing $c$. For the crossing $c$ in Fig.~\ref{Fig1fig3}, 
$$
W_{D}(c) = \operatorname{sgn} (c)(a-b-1). 
$$ 

\noindent In \cite{cheng2013polynomial}, Z. Cheng and H. Gao assigned an integer value, called \emph{index value}, to each classical crossing $c$ of a virtual knot diagram and denoted it by $\operatorname{Ind}(c)$. 
It was proved \cite[Theorem~3.6]{cheng2013polynomial} that  
\begin{equation}
\operatorname{Ind}(c) = W_{D}(c) = \operatorname{sgn} (c)(a-b-1)  \label{1.eq2.2}
\end{equation}
with $a$ and $b$ be labels as presented in Fig.~\ref{Fig1fig3}. Thus, the index value can be computed using the labeling procedure described in Fig. 3. From equation~(\ref{1.eq2.2}), it is evident that $\abs{\operatorname{Ind}(c)}\leq \abs{\mathcal{C}(D)},$ where $\mathcal{C}(D)$ is the set of classical crossings of $D$ and $|\mathcal{C}(D)|$ is the cardinality of the set $\mathcal{C}(D)$.\\
\noindent In \cite{SatohTaniguchi2014}, S.~Satoh and K.~Taniguchi introduced the $n$-th writhe for virtual knots. For each $n \in \mathbb{Z}\setminus \{0\}$, the $n$-th writhe $J_n(D)$ of an oriented virtual knot diagram $D$ is defined as the number of positive sign crossings minus number of negative sign crossings of $D$ with index value $n$. Note that $J_n (D)$ is  indeed the coefficient of $t^n$ in the affine index polynomial. This $n$-th writhe is a virtual knot invariant, for more details we refer to \cite{SatohTaniguchi2014}. 

\noindent Thus, the index value can be computed using the labeling procedure described in Fig. 3. 
\begin{proposition} \cite[Theorem~1.5]{SatohTaniguchi2014}
\label{prop:boundsUbyS}
Let $K$ be a virtual knot. 
\begin{itemize} 
\item[$(a)$]  If $J_n(K) \neq J_{-n}(K)$ for some $n \in {\mathbb Z}\setminus \{0\}$, then $(1,0) \leq U(K)$. 
\item[$(b)$]  $(0,   \sum_{n \neq 0} | J_n(K) |  /2 ) \leq U(K)$.  
\end{itemize}
\end{proposition}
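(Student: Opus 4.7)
The plan is to analyze how the $n$-th writhes $J_n$ transform under the two local moves that enter the definition of the unknotting index, namely classical crossing change and virtualization. First I would unpack the dictionary order: $(1,0)\le U(K)$ asserts that $K$ cannot be reduced to the unknot by crossing changes alone, while $(0,N)\le U(K)$ bites only when the first coordinate of $U(K)$ equals zero, in which case it forces the second coordinate to be at least $N$. Hence both (a) and (b) reduce to controlling how the collection $\{J_n(K)\}$ evolves along a sequence of classical crossing changes applied to some diagram of $K$.

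The heart of the argument is a local calculation at a classical crossing $c$ with $\operatorname{sgn}(c)=\epsilon$ and $\operatorname{Ind}(c)=n$. Using Cheng's formula $\operatorname{Ind}(c)=\operatorname{sgn}(c)(a-b-1)$ and the labeling convention of Fig.~\ref{Fig1fig3}, I would establish two facts. The first is that crossing change at $c$ sends the pair $(\operatorname{sgn}(c),\operatorname{Ind}(c))$ to $(-\epsilon,-n)$, so that $J_n$ and $J_{-n}$ each decrease by $\epsilon$ and $J_m$ is unchanged for $m\neq\pm n$; in particular the asymmetric part $J_n-J_{-n}$ is preserved and $|J_n|+|J_{-n}|$ shifts by at most $2$. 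The second is that virtualization at $c$ simply deletes the contribution of $c$, so $J_n$ changes by $-\epsilon$ while $J_{-n}$ is untouched, which is precisely how virtualization can destroy the $n\leftrightarrow -n$ symmetry.

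Granted these two facts, part (a) follows by contraposition. If $U(K)=(0,m)$, some diagram $D\in[K]$ is unknotted by $m$ crossing changes, and iterating the invariance of $J_n-J_{-n}$ along this sequence and comparing against the trivial knot (for which every $J_n$ vanishes) forces $J_n(K)=J_{-n}(K)$ for every $n\neq 0$. For part (b), the same sequence of $m$ crossing changes can decrease $\sum_{n\neq 0}|J_n|$ by at most $2$ per step, so $2m\ge\sum_{n\neq 0}|J_n(K)|$; that is, $m\ge\sum_{n\neq 0}|J_n(K)|/2$, which is precisely the lower bound in dictionary order.

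The main obstacle is making the local crossing-change calculation airtight. A crossing change at $c$ is a priori a global operation on the diagram, and it can in principle perturb the Cheng labeling on arcs arbitrarily far from $c$. The delicate point is to check that these far-away perturbations only shift labels by an overall constant along each arc-connected region, and therefore cancel in the differences $a-b$ appearing in $\operatorname{Ind}$ at every crossing other than $c$; consequently only the contribution of $c$ itself truly changes. Once this local-versus-global issue is resolved, both (a) and (b) reduce to the short counting arguments described above.
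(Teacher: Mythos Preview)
The paper does not actually prove this proposition: it is quoted verbatim as \cite[Theorem~1.5]{SatohTaniguchi2014} and used as a black box throughout, so there is no ``paper's own proof'' to compare against. Your argument is essentially the standard one underlying the cited result, recast in the language of the unknotting index, and it is correct.

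One remark on the point you flag as the ``main obstacle.'' The worry that a crossing change at $c$ might globally perturb the Cheng labeling, and thereby alter $\operatorname{Ind}(c')$ at distant crossings $c'$, evaporates once you look at the labeling rule in Fig.~\ref{Fig1fig3}: the rule is identical for positive and negative classical crossings, so the Cheng coloring depends only on the underlying flat diagram. Hence a crossing change at $c$ leaves every label, and therefore every $a'-b'-1$ and every $\operatorname{sgn}(c')$ for $c'\neq c$, untouched; only $\operatorname{sgn}(c)$ flips, which sends $\operatorname{Ind}(c)=\epsilon(a-b-1)=n$ to $-n$ exactly as you claim. With this observation the local calculation is immediate and the rest of your argument goes through without further work.
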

\noindent A \emph{flat virtual knot diagram} is a virtual knot diagram obtained by forgetting the over/under-information of every real crossing.  It means that a flat virtual knot is an equivalence class of flat virtual knot diagrams by \emph{flat Reidemeister moves} which are Reidemeister moves (see Figs.~\ref{fig1a} and \ref{fig1b}) without the over/under information. We will say that a virtual knot invariant is a \emph{flat virtual knot invariant}, if it is remains unaffected by crossing change operations. For $n\in \mathbb{N}$, a virtual knot invariant, known as {\it $n$-dwrithe}, defined in \cite{kaur2018polynomial} is an example of a flat virtual knot invariant.  

\begin{lemma}\cite{kaur2019unknottingknot}\label{lem}
Let $K$ be a virtual knot and $\overline{K}$ its flat projection. If $\overline{K}$ is non-trivial, then 
$U(K) \geq (1,0)$.  
\end{lemma}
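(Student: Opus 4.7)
The plan is to prove the contrapositive: assume $U(K) < (1,0)$ in the dictionary order and conclude that the flat projection $\overline{K}$ is trivial as a flat virtual knot.

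First, I would unpack the hypothesis. In the dictionary order on pairs of non-negative integers, $U(K) < (1,0)$ forces $U(K) = (0,m)$ for some $m \geq 0$. By the definition of the unknotting index of a virtual knot, this means there exists a diagram $D \in [K]$ that is $(0,m)$-unknottable: there is a choice of $m$ classical crossings of $D$ such that performing a crossing change at each of them (and using no virtualizations at all) yields a diagram that is equivalent, through a finite sequence of classical and virtual Reidemeister moves, to a diagram of the trivial knot.

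Next, I would apply the flat projection to this entire process. The key observation, which is essentially the reason the statement holds, is that crossing change is trivial at the flat level: forgetting over/under information sends a crossing and its switch to the same flat crossing. Therefore, performing the $m$ crossing changes on $D$ becomes the identity after projection, so the resulting projected diagram still equals $\overline{D}$. Moreover, each classical Reidemeister move and each virtual Reidemeister move becomes a flat Reidemeister move after forgetting the over/under data. Projecting the trivial knot diagram at the end yields a trivial flat diagram. Chaining these observations, $\overline{D}$ is equivalent via flat Reidemeister moves to a trivial flat diagram, so $\overline{K}$ is trivial.

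Contrapositively, if $\overline{K}$ is non-trivial then $U(K) \geq (1,0)$, which is the claim. I do not foresee a real obstacle here; the entire content is the remark that crossing changes descend to the identity under the flat projection while Reidemeister moves descend to flat Reidemeister moves, so the only care needed is to verify that the projection of every move allowed in an $(0,m)$-unknotting sequence remains a legal move at the flat level.
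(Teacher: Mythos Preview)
Your argument is correct. The contrapositive is the natural route: if $U(K)=(0,m)$ then some diagram of $K$ becomes trivial using only crossing changes and Reidemeister moves, and since crossing changes vanish under the flat projection while classical and virtual Reidemeister moves descend to flat Reidemeister moves, the flat projection $\overline{K}$ is trivial. There is no gap here.

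As for comparison with the paper: note that the paper does not actually prove this lemma. It is stated with a citation to \cite{kaur2019unknottingknot} and used as a black box (together with the partial converse recorded in Remark~\ref{lem:flat}). Your write-up therefore supplies what the present paper omits, and the argument you give is the standard one underlying the cited result.
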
 
\begin{remark}\label{lem:flat}
Let $K$ be a virtual knot with $U(K) \geq (1,0)$. Then $K$ cannot
be deformed into the trivial knot by applying the crossing change operation. Thus the flat
projection $\overline{K}$ of $K$ is non-trivial.
By using Lemma~\ref{lem}, we can say the flat projection $\overline{K}$ is non-trivial if and only if
$U(K) \geq (1,0)$.  
\end{remark}

\noindent In \cite{cheng2013polynomial}, Z. Cheng and H. Gao defined an invariant, called span, for 2-component virtual links using Gauss diagram.  Consider a diagram $D=D_1 \cup D_2$ of a virtual link $L =K_1 \cup K_2$. Let us traverse along $D_1$ and consider the linking crossings of $D_1$ and $D_2$. If $\alpha_{+}$ (respectively, $\alpha_{-}$) is the number of over linking crossings with positive sign (respectively, negative sign) and $\beta_{+}$(respectively, $\beta_{-}$) is the number of under linking crossings with positive sign (respectively, negative sign), then $span(D)$ of $D$ is defined as
\[span(D) = | (\alpha_{+} - \alpha_{-})-(\beta_{+}- \beta_{-})|.\]
Further, the definition of span value is extended in \cite{kaur2019unknottinglink} for virtual links having arbitrary components.
Span value of an $n$-component virtual link diagram $D =D_1\cup D_2 \cup \cdots \cup D_n$ is defined as
\[span(D) = \displaystyle \sum_{i\neq j} span(D_i \cup D_j).\]
Since $span(D_i\cup D_j)$ is a virtual link invariant, $span(D)$ is also a virtual link invariant.
\begin{lemma}\cite{kaur2019unknottinglink} \label{lem:span_inv}
 Let $D$ be a virtual link diagram. Then $\operatorname{span}(D)$ is equal to the minimum number of crossings in $D$ which should be virtualized to obtain a diagram $D'$ such that $\operatorname{span}(D')=0$.
\end{lemma}
\noindent Moreover, in \cite[Corollary~2.1]{kaur2019unknottinglink}, it was proved that $U(L)\geq (\operatorname {span}(L),0)$ for any virtual link $L$.
\begin{remark} \label{rem:span_inv}
 By definition, the span value of a virtual link depends only on linking crossings and it is invariant under crossing change operation. Further, the span values of any $n$-component split link and unlink are zero. Let $D$ be a diagram of a virtual link $L=K_1\cup K_2\cup\cdots \cup K_n$ and $U(L)=(n',m')$. Then, \\
\begin{enumerate}
\item by Lemma ~\ref{lem:span_inv}, minimum $\operatorname{span}(D)$ number of linking crossings are required to virtualize in $D$ such that the resulting diagram becomes a diagram of a split link.
\item for any $K_i, ~i\in\{1,2,\ldots,n\}$, if there exists a $k\in {\mathbb Z}\setminus \{0\}$ such that  $J_k(K_i) \neq J_{-k}(K_i)$, then $(1,0) \leq U(K_i)$. That means, at least one virtualization is required in $K_i$ to deform $K_i$ into a trivial knot. 
\end{enumerate}
Since span is a virtual link invariant and remains unaffected by crossing change operations, combining (1) and (2), observe that minimum $\operatorname{span}(D)+1$ number of crossings needed to virtualize in $D$ such that the resulting diagram becomes a diagram of a unlink.  That is $n'\geq (\operatorname{span}(D)+1)$.
\end{remark}

 \noindent The linking number, denoted by $\operatorname{lk(D)}$, of an n-component virtual link diagram $D =D_1\cup D_2 \cup \cdots \cup D_n$ is defined as
\[ \operatorname{lk(D)}=\dfrac{1}{2}\sum_{c\in D_i \cap D_j ,i\neq j}\operatorname{sgn}(c),\]
where the summation runs over all the linking crossings and $\operatorname {sgn}(c)$ is the sign of crossing $c$. The linking number $\operatorname{lk(D)}$ is a virtual link invariant. \\

\noindent Let $S$ be a subset of $\mathcal{C}(D)$ and $|S|$ denote the cardinality of $S$. Then $\Lambda(D)$ and $\ell(D)$ are defined as 
\[
\Lambda(D) =\{ S \subseteq \mathcal{C}(D) \mid  |S| = \operatorname{span}(D) \quad \text{and} \quad  \operatorname{span} (D_{S})=0\},\]
\[\ell_D= \min \{ \abs{ \operatorname{lk}(D_{S})} \mid  S \in \Lambda(D)\},\]
where  $D_{S}$ is a diagram obtained from $D$ by virtualizing all the crossings of $S$.\\ 

\noindent For a virtual link diagram $D$ of 2-components, $\ell_D$ is a virtual link invariant \cite[Proposition~2.1]{kaur2019unknottinglink}. 

\begin{theorem} \cite{kaur2019unknottinglink} \label{thm:bound_links}
If $D=D_{1}\cup D_{2}\cup \ldots \cup D_{n}$ is a diagram of a virtual link $L$, then
\[
\Big(\operatorname{span}(D), \displaystyle \sum_{i \neq j} \ell_{D_{i} \cup D_{j}} + \frac{1}{2} \sum_{i=1}^{n} \sum_{N \in \mathbb{Z}\setminus \{0\}} |  J_N(D_{i}) | \Big) \leq U(L).
\] 
\end{theorem}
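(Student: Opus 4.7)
The plan is to fix a diagram $E\in[L]$ that realizes $U(L)=U(E)=(a,b)$, so that $E$ becomes trivial after virtualizing a set $V$ of $a$ classical crossings and changing the over/under information at a set $C$ of $b$ classical crossings. Since both coordinates are compared in the dictionary order, I would first bound $a$ from below by $\operatorname{span}(D)$, and then, assuming equality in the first coordinate, bound $b$ from below by the sum on the right.

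First I would handle the $\operatorname{span}$ bound. Using virtual link invariance of $\operatorname{span}$, one has $\operatorname{span}(D)=\operatorname{span}(E)$. The key sign-tracking observation is that a crossing change at a linking crossing simultaneously flips its sign and its over/under status, so its signed contribution to $(\alpha_+-\alpha_-)-(\beta_+-\beta_-)$ on the relevant pair is preserved; hence crossing changes preserve every pairwise span, and therefore $\operatorname{span}$. A virtualization at a linking crossing alters that expression by $\pm 1$, while a virtualization at a self-crossing leaves every pairwise span invariant. Since the trivial link has span $0$, the $a$ virtualizations must collectively reduce $\operatorname{span}(E)$ to $0$, forcing $a\geq\operatorname{span}(D)$.

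Next, assume $a=\operatorname{span}(D)$. Because virtualizing a self-crossing never reduces span, the equality $|V|=\operatorname{span}(E)$ forces $V$ to consist only of linking crossings; the same counting argument shows that, for every pair, the restriction $V\cap \mathcal{C}(E_i\cup E_j)$ lies in $\Lambda(E_i\cup E_j)$. Partition $C$ as $C=\bigsqcup_{i<j}C_{ij}\,\sqcup\,\bigsqcup_i C_i$, with $C_{ij}$ at linking crossings between $E_i$ and $E_j$ and $C_i$ at self-crossings of $E_i$. Each element of $C_{ij}$ changes $\operatorname{lk}(E_{i,V}\cup E_{j,V})$ by $\pm 1$, so to reach linking number zero one needs $|C_{ij}|\geq |\operatorname{lk}(E_{i,V}\cup E_{j,V})|\geq \ell_{D_i\cup D_j}$ by definition of $\ell$. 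For each component, since $V$ touches no self-crossing of $E_i$, the virtual knot $E_i$ (with linking crossings treated as virtual) is trivialized by crossing changes in $C_i$ alone, and Proposition~\ref{prop:boundsUbyS}(b) yields $|C_i|\geq\tfrac{1}{2}\sum_{k\neq 0}|J_k(E_i)|=\tfrac{1}{2}\sum_{k\neq 0}|J_k(D_i)|$. Summing over pairs and components, and matching the ordered-pair convention $\sum_{i\neq j}=2\sum_{i<j}$, gives the claimed second coordinate.

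The main obstacle will be the step asserting that the self-crossing changes $C_i$ alone trivialize the component $E_i$. This relies on two independent facts: operations at linking crossings leave the Cheng coloring of $E_i$ — and hence every $J_k(E_i)$ — untouched, and the restriction of $V$ to each 2-component sublink must itself be span-minimizing in order for the global $|V|=\operatorname{span}(E)$ bound to be tight. I plan to dispatch both points by analyzing the effect of each operation one move at a time on a Gauss diagram that records all components simultaneously, and then restricting the analysis to the sub-Gauss diagram of each $E_i$ and each pair $E_i\cup E_j$; this should also justify the sign-tracking claim used for $\operatorname{span}$ in the first coordinate argument.
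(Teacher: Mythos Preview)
This theorem is quoted from \cite{kaur2019unknottinglink} in the preliminaries section and is not proved in the present paper; there is no proof here against which to compare your attempt.

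For what it is worth, your outline follows the natural strategy and is essentially sound: establish $a\ge\operatorname{span}(D)$ from the flat-invariance of span together with the effect of a single virtualization on one pairwise span, and in the equality case split $C$ into linking pieces $C_{ij}$ (bounded below via $\ell$) and self pieces $C_i$ (bounded below via Proposition~\ref{prop:boundsUbyS}(b)). Two places deserve a second look. First, in this paper the symbol $\sum_{i\neq j}$ is used as a sum over \emph{unordered} pairs (compare the computation of $\operatorname{span}(D)$ in the proof of Proposition~\ref{thm:(n,m)link}); your closing remark about an ``ordered-pair convention $\sum_{i\neq j}=2\sum_{i<j}$'' would, if taken literally, make the target on the $\ell$-term twice what you have actually proved, so simply identify $\sum_{i\neq j}$ with $\sum_{i<j}$ and drop that adjustment. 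Second, when you claim that the crossing changes in $C_i$ alone trivialize the virtual knot $E_i$, you correctly note that $V$ avoids the self-crossings of $E_i$, but you must also observe that the crossing changes in $C\setminus C_i$ (those at linking crossings and at self-crossings of the other components) leave the Gauss diagram of $E_i$ untouched; this is immediate, yet it is precisely the step that legitimizes invoking Proposition~\ref{prop:boundsUbyS}(b) on the component. With these two points addressed the argument goes through.
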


\section{Main Results} 
\label{main}
\noindent  In this section, we construct infinite families of virtual knots with unknotting indices $(0,m)$ and $(1,0)$, respectively. However, proving that a virtual knot has an unknotting index of  $(n,m)$ for any pair $(n,m)$ is challenging due to the lack of sufficient stringent lower bounds on the unknotting index of virtual knots. Even demonstrating that a virtual knot has an unknotting index of $(1,m)$ or $(2,0)$ represents a significant challenge. To address this issue, we establish an infinite family of distinct virtual knot diagrams with unknotting indices of $(n,m)$ for any positive integers $n$ and $m$. Moreover,  we prove that these diagrams represent minimal classical crossing diagrams. Furthermore, in Theorem~\ref{thm:(n,m)link} and Proposition~\ref{prop1:(n,m)link}, we positively resolve the scenario for virtual links by establishing the existence of an infinite family of  $k(>1)$-component virtual links with unknotting index $(n,m)$.
 \begin{figure}[!ht] 
{\centering
\includegraphics[scale=0.48]{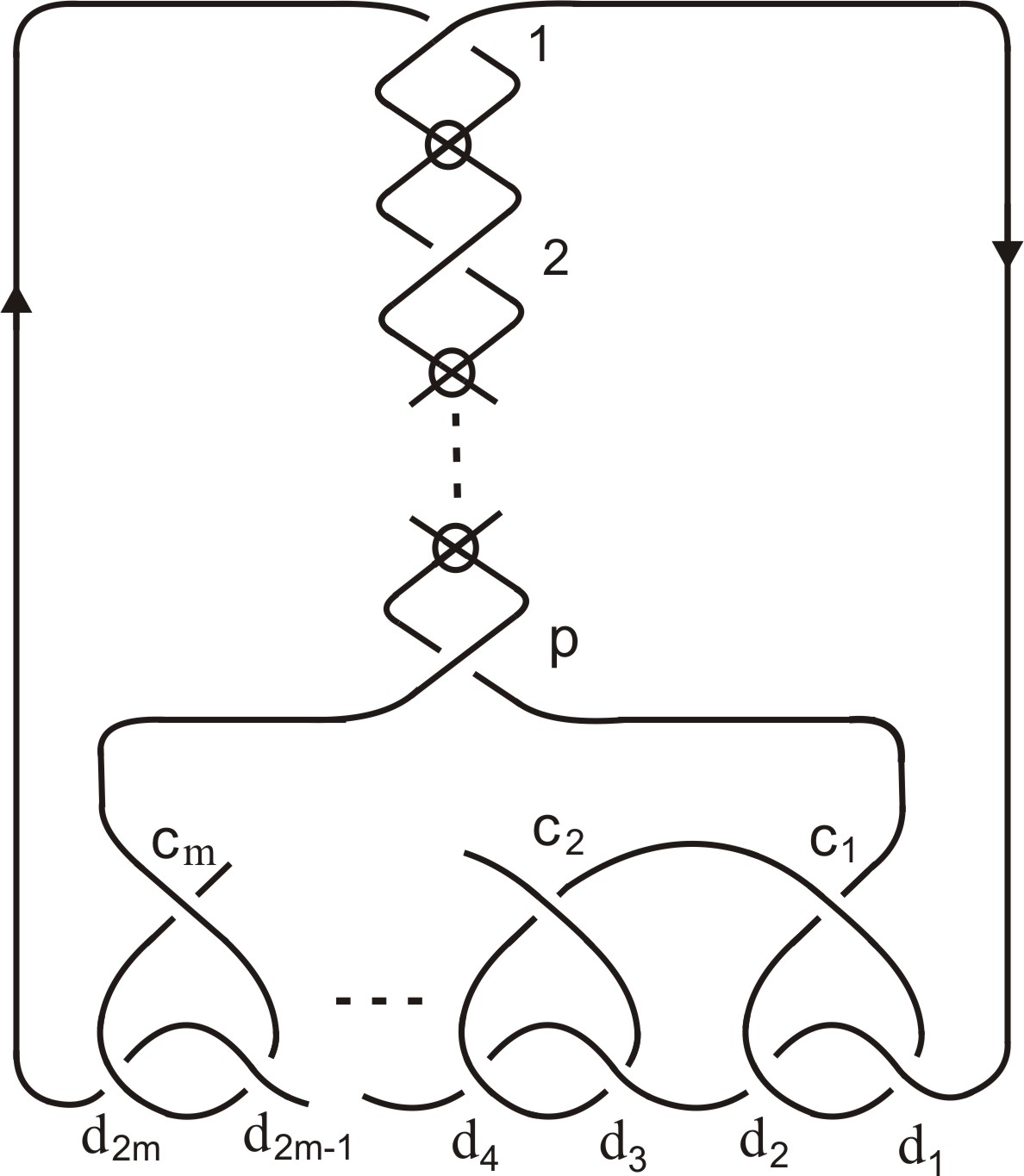}
\caption{Virtual knot diagram $K^{m,p}$. }\label{figrf1}
 }
 \end{figure}
\begin{theorem}
\label{thm:(0,m)}
For any positive integer $m$, there exists an infinite family of virtual knots with  unknotting index $(0,m)$.
\end{theorem}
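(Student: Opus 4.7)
The plan is to exploit Proposition~\ref{prop:boundsUbyS}(b) together with Remark~\ref{lem:flat}: to certify $U(K)=(0,m)$ it suffices to (i) unknot $K$ by exactly $m$ crossing changes (giving $U(K)\le(0,m)$ and in particular forcing the first coordinate to be $0$), and (ii) show that the sum $\tfrac12\sum_{n\neq 0}|J_n(K)|$ equals $m$ (giving $U(K)\ge(0,m)$). So the first step is to read the diagram $K^{m,p}$ of Fig.~\ref{figrf1} carefully: I expect it to contain $2m$ classical crossings arranged into $m$ pairs, together with some virtual/classical structure depending on the parameter $p$ that produces infinitely many inequivalent knots without affecting the first $m$ writhe contributions.

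Next, I would produce an explicit unknotting sequence using only crossing changes. Concretely, I would identify $m$ distinguished classical crossings in $K^{m,p}$ (one from each ``pair'') such that changing them all transforms the diagram, via classical and virtual Reidemeister moves, into the trivial diagram. This immediately yields $U(K^{m,p})\le(0,m)$. In particular the flat projection of $K^{m,p}$ is trivial, so by Remark~\ref{lem:flat} the first coordinate of $U(K^{m,p})$ is forced to be $0$.

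For the matching lower bound, I would assign a Cheng coloring to $K^{m,p}$ and compute $\operatorname{Ind}(c)=\operatorname{sgn}(c)(a-b-1)$ at every classical crossing using \eqref{1.eq2.2}. The expected outcome is that the $2m$ ``essential'' crossings contribute in pairs of equal index with matching sign, while the remaining crossings (those depending on $p$) have index $0$ and therefore contribute nothing to any $J_n$. From this one reads off $\sum_{n\neq 0}|J_n(K^{m,p})|=2m$, so Proposition~\ref{prop:boundsUbyS}(b) gives $U(K^{m,p})\ge(0,m)$, and combined with the preceding paragraph yields $U(K^{m,p})=(0,m)$. The main technical obstacle here is to keep the Cheng-coloring bookkeeping consistent across all crossings simultaneously and to verify that the $p$-dependent crossings really are index-$0$; one has to be careful that virtual crossings do not disrupt the arc labeling, which is where the construction in Fig.~\ref{figrf1} is presumably designed to help.

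Finally, two things remain. To get an \emph{infinite} family I would show that the knots $K^{m,p}$ and $K^{m,p'}$ are inequivalent for $p\neq p'$; the natural tool is the affine index polynomial (or one of the polynomials of \cite{cheng2013polynomial, kaur2018polynomial}), whose higher-degree coefficients should be controlled by $p$ even though the total $\sum|J_n|$ is fixed. For primeness, I would argue by contradiction: any nontrivial connected-sum decomposition $K^{m,p}=K_1\#K_2$ would split the classical crossings of a minimal diagram into two groups separated by a pair of arcs, and inspection of Fig.~\ref{figrf1} (together with the fact that each summand must again have trivial flat projection and nonzero writhes on at least one side) rules this out. Primeness is likely to be the subtlest step, and if a direct geometric argument in the diagram is awkward, I would fall back on applying a primeness criterion via the Cheng coloring (e.g.\ showing that no arc of $K^{m,p}$ separates the diagram into two nontrivially-colored pieces).
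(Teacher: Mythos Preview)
Your approach is essentially the same as the paper's: upper bound via $m$ explicit crossing changes (at $d_2,d_4,\dots,d_{2m}$), lower bound via Proposition~\ref{prop:boundsUbyS}(b) after computing the indices, and infiniteness via the $n$-th writhes. Two small corrections to your expectations about $K^{m,p}$: the $2m$ crossings $d_1,\dots,d_{2m}$ carry the \emph{$p$-dependent} indices $\pm(p-1)$ (not fixed ones), while the $p$ crossings in the vertical block and the $c_i$ all have index~$0$; thus $J_{p-1}=J_{1-p}=-m$ and $\sum_{n\neq0}|J_n|=2m$ exactly as you predict, and the knots for different $p$ are distinguished directly by the \emph{location} of the nonzero writhes ($J_{p_1-1}(K^{m,p_2})=0$ when $p_1\neq p_2$), so no higher polynomial invariant is needed.

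On primeness: you correctly flag this as the subtlest step, and in fact the paper's proof does not address it at all---it establishes $U(K^{m,p})=(0,m)$ and the infiniteness of the family, but gives no argument that the $K^{m,p}$ are prime. So your proposed contradiction/separating-arc argument would actually go beyond what the paper proves; be aware, though, that primeness for virtual knots is delicate (connected sum is not well-defined in general), and a rigorous argument would need a precise definition and likely more than diagrammatic inspection.
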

\begin{proof} To establish this result, we construct  virtual knot diagram $K^{m,p}$ as shown in Fig.~\ref{figrf1}, where $m$, and $p\geq 2$ are positive integers.  Let  $c_{1}, c_{2},\ldots,c_{m}$ and $d_{1},\ldots, d_{2m}$ denote the classical crossings as shown in the diagram $K^{m,p}$. Then the index values of these crossings are given as
\[ \operatorname{Ind}(c_{i})=0, \text{~for~} i=1,2,\ldots, m,\]
\[\operatorname{Ind}(d_{i})=\begin{cases}
 p-1,&  \text{for~} i=1,3,\ldots, 2m-1,  \\
 -(p-1),          & \text{for~} i=2,4,\ldots, 2m.
             \end{cases}\] 
The index value of the classical crossings in the vertical block of $K^{m,p}$  is
\[\operatorname{Ind}(1)=\operatorname{Ind}(2)= \ldots = \operatorname{Ind}(p)=0.\] 
\noindent The sign value of all the classical crossings in $K^{m,p}$ except $c_{i}$ crossings is $(-1)$ and $\operatorname{sgn}(c_i)=+1$ for $i=1,2,\ldots, m.$ Moreover, $p\geq 2$ implies $1-p\neq 0$. Therefore, the writhe invariants $J_{(p-1)}$ and $J_{(1-p)}$ are given by
\[J_{(p-1)}(K^{m,p})= \displaystyle \sum_{i=1}^{m}\operatorname{sgn}(d_{2i-1})= -m,  ~ J_{-(p-1)}(K^{m,p})= \displaystyle \sum_{i=1}^{m}\operatorname{sgn}(d_{2i})= -m,\text{~and}\] 
\begin{equation}\label{Eq0(0,m)}
 \sum_{k \neq 0} | J_k(K^{m,p}) |= \abs{J_{(p-1)}(K^{m,p})}+ \abs{J_{(1-p)}(K^{m,p})}=2m.
\end{equation}
By using Proposition~\ref{prop:boundsUbyS}$(b)$, and equation~(\ref{Eq0(0,m)}), we have 
\begin{equation}\label{Eq1(0,m)}
 U(K^{m,p})\geq \Big(0, \dfrac{1}{2}\sum_{k \neq 0} | J_k(K^{m,p}) | \Big)=(0,m).
 \end{equation}
\noindent Also by applying crossing change operations at $d_2, d_4,\ldots, d_{2m}$ crossings, the resulting diagram becomes a trivial knot diagram as shown in Fig.~\ref{figrf3}. Therefore,
\begin{equation}\label{Eq2(0,m)}
U(K^{m,p})\leq (0,m).
\end{equation}
Evidently, equation~(\ref{Eq1(0,m)}), and equation~(\ref{Eq2(0,m)}) implies  $U(K^{m,p})= (0,m)$ .\\
Observe that for different integers $p_1, p_2\geq 2$, 
\[J_{p_1-1}(K^{m,p_1})=J_{p_2-1}(K^{m,p_2})=-m,\text{~and~} J_{p_1-1}(K^{m,p_2})=J_{p_2-1}(K^{m,p_1})=0.\]
\noindent Since the $n$-th writhe is a virtual knot invariant, $K^{m,p_1}$ and $K^{m,p_2}$ represent diagrams of two  different  virtual knots. Therefore, for a fixed value $m$, there are infinitely many positive integers $p\geq 2$ such that $U(K^{m,p})=(0,m)$. Hence the result follows.
 \end{proof}

 \begin{figure}[!ht] 
{\centering
\includegraphics[scale=0.4]{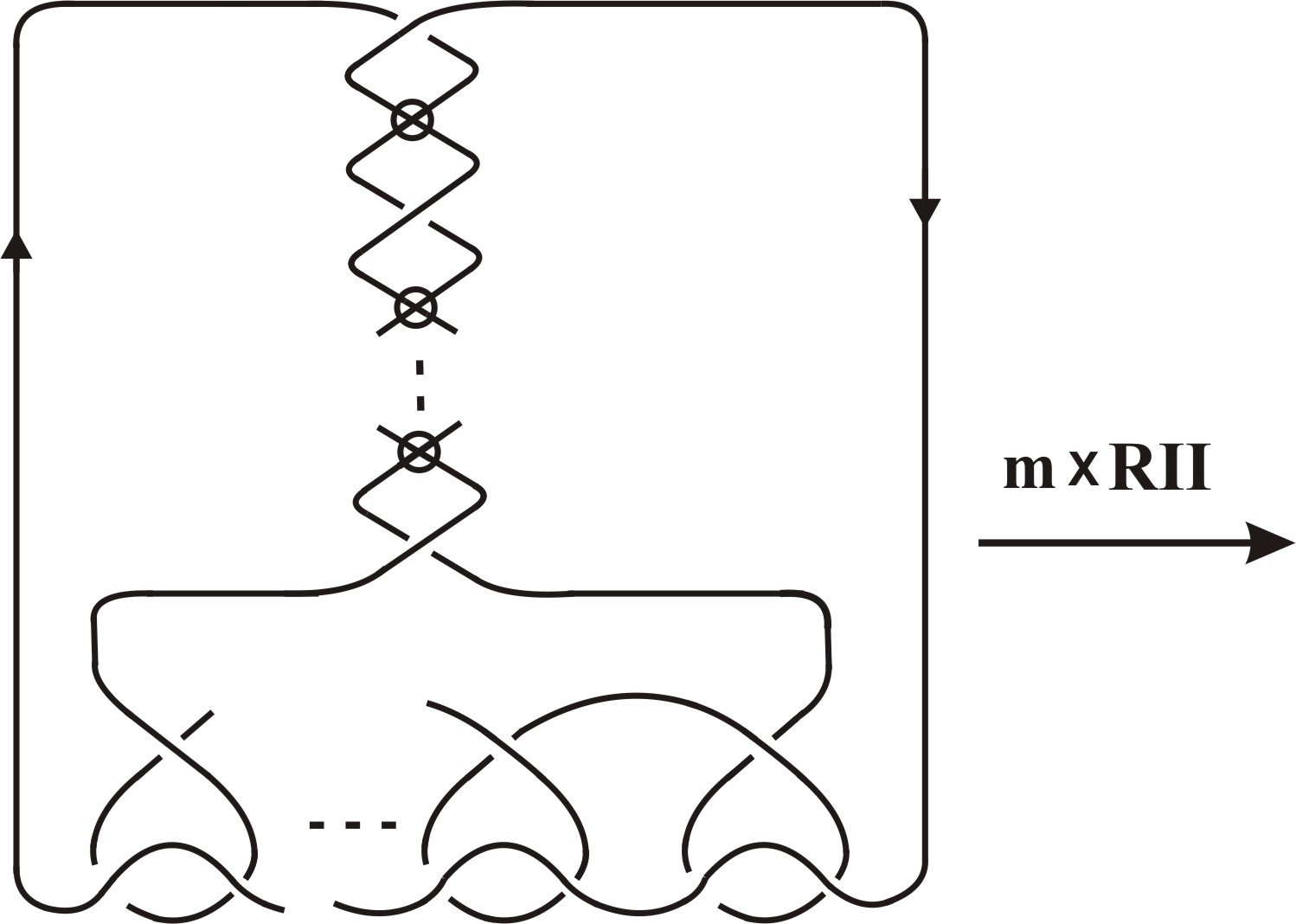} \includegraphics[scale=0.4]{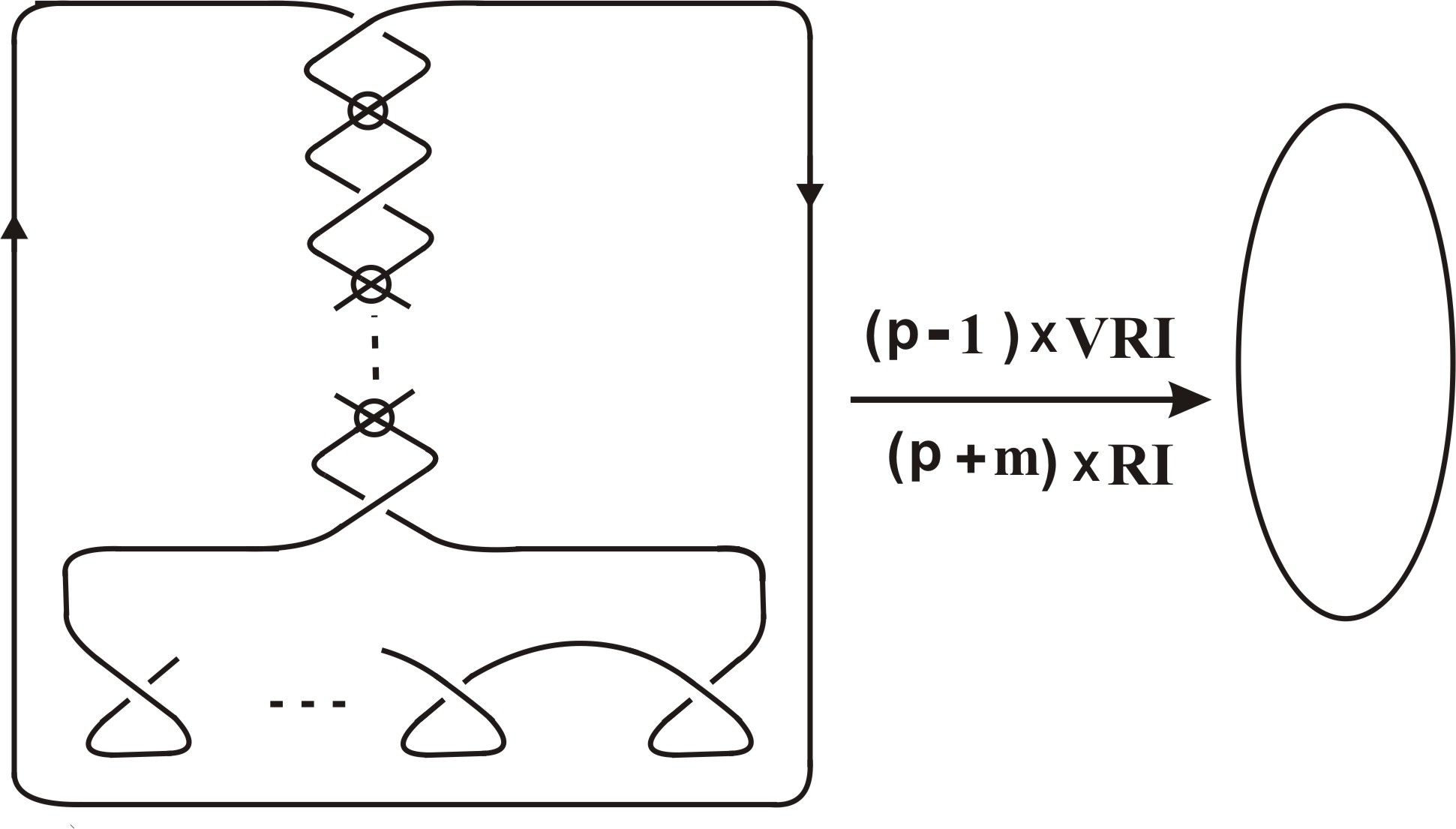}
 \caption{A diagram of the trivial knot.}
\label{figrf3}}
\end{figure}

\begin{figure}[!ht] 
{\centering
\subfigure[$D^{n,p}$]{\includegraphics[scale=0.45]{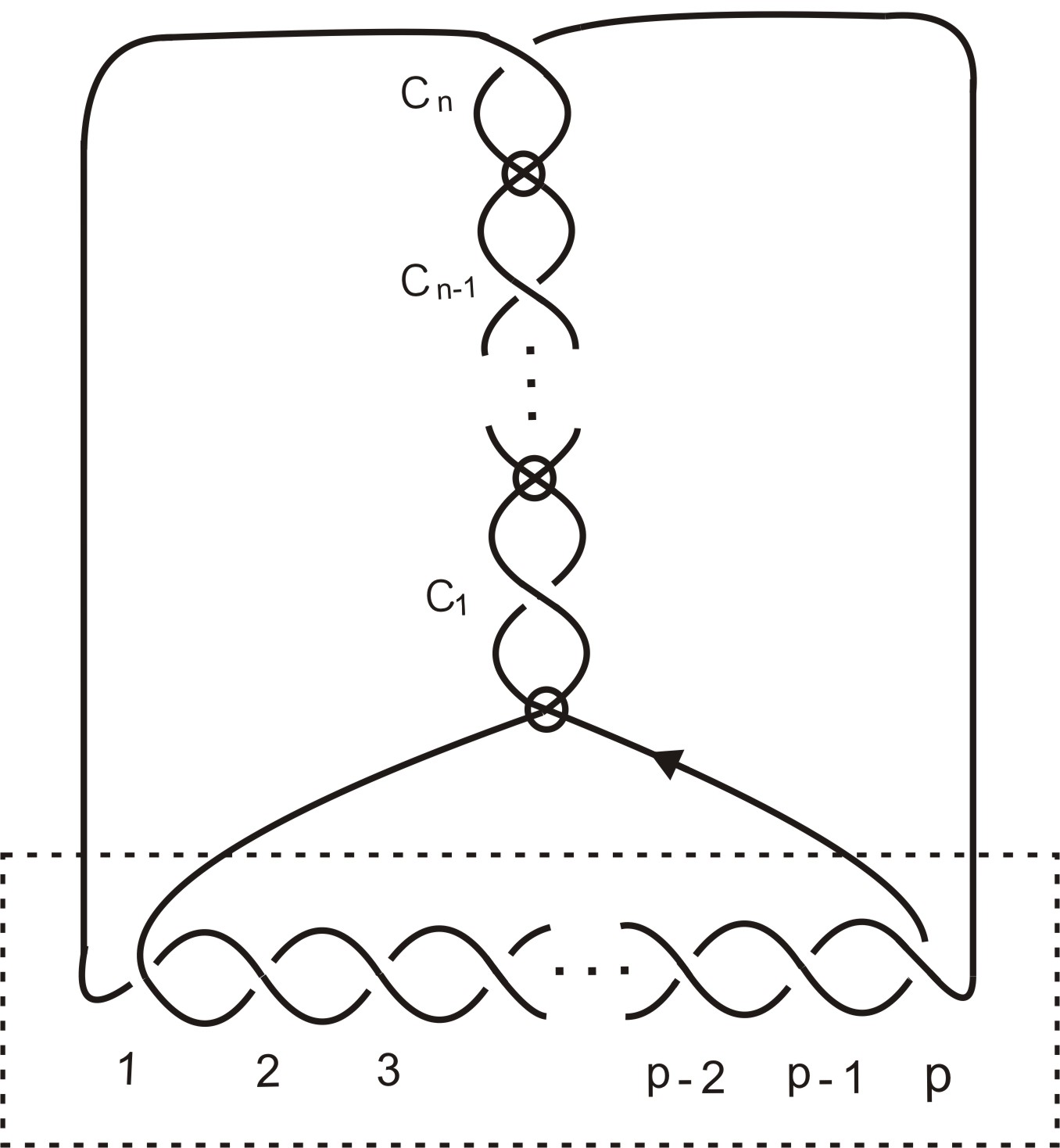}\label{tw1}}
  \hspace{.5cm}
 \subfigure[$\tilde{D}^{n,p}$]{\includegraphics[scale=0.45]{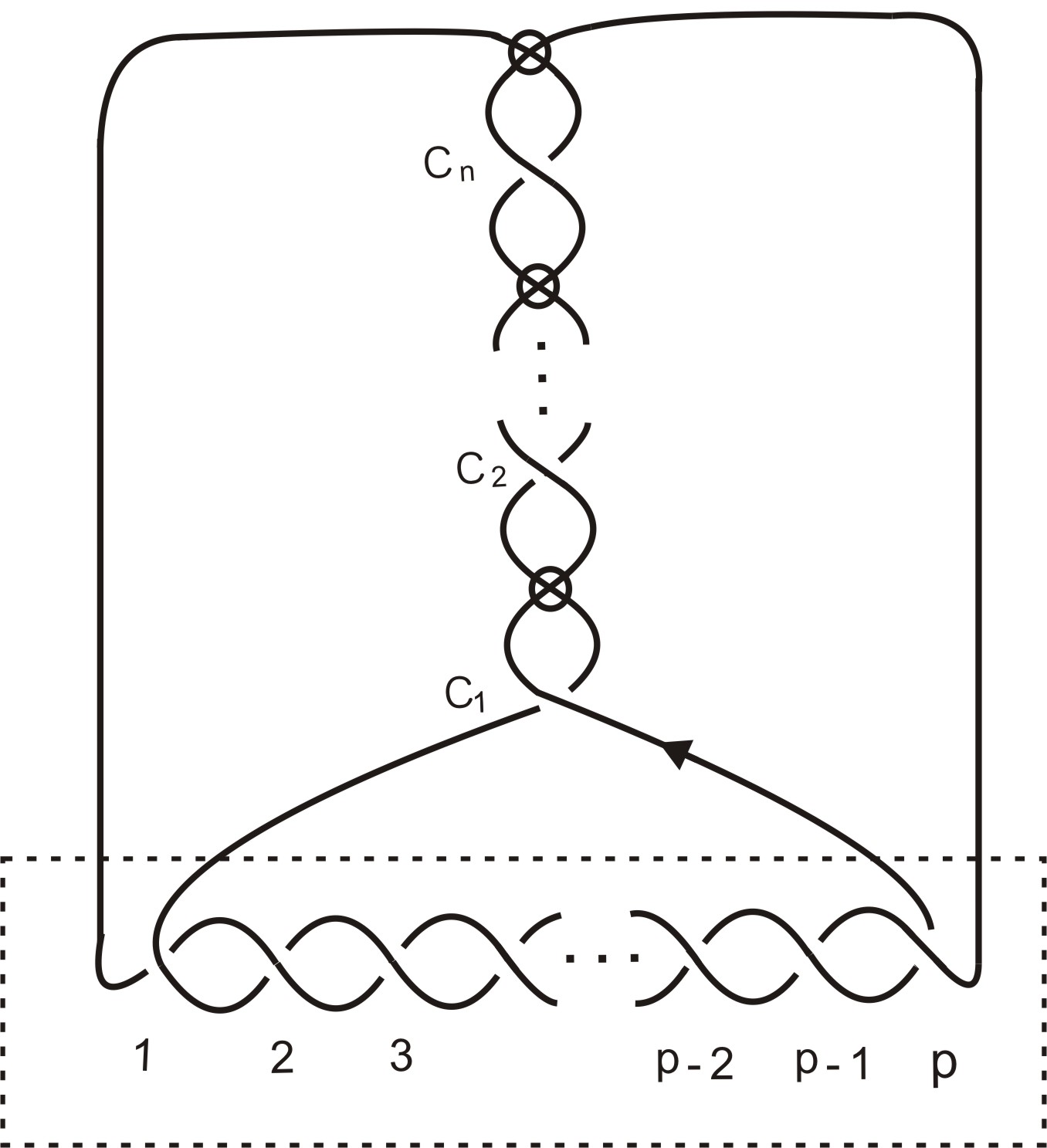} \label{tw2} }
\caption{Virtual knot diagrams $D^{n,p}$ and $\tilde{D}^{n,p}$. }
\label{ttw}
 }
 \end{figure}
 
\noindent In order to establish  Theorems~\ref{thm:(1,0)knot} and \ref{thm:(n,m)knots},  we first  construct a class of virtual knot diagrams and define few notations.
\begin{definition} {\rm 

A diagram $D$ of a virtual knot $K$ is said to have  {\it minimum classical crossings} if, 
\[|\mathcal{C}(D)|=\min\Big\{|\mathcal{C}(D')| \Big | ~D'\in [K]\Big\},\]
where $\mathcal{C}(D)$ is set of classical crossing of $D$ and $|\mathcal{C}(D)|$ is the cardinality of the set $\mathcal{C}(D)$. Then a diagram $D$ of $K$ with {\it minimum number of classical crossings} is called a minimal classical crossing diagram.
}\end{definition}
\noindent Construct virtual knot diagrams denoted by $D^{n,p}$ and  $\tilde{D}^{n,p}$ with $n$ number of virtual crossings as shown in Fig.~\ref{tw1} and \ref{tw2}, respectively.  Here, we discuss the unknotting index for a special class of virtual knots that are obtained by virtualizing some $m$ number of classical crossings in $D^{n,p}$ and  $\tilde{D}^{n,p}$. In particular, due to similarity in $D^{n,p}$ and  $\tilde{D}^{n,p}$, we discuss only the class of virtual knots that are obtained from $D^{n,p}$. We label the crossings of $D^{n,p}$ and  $\tilde{D}^{n,p}$ that are shown in the respective rectangular boxes in Fig.~\ref{ttw} with integers. We denote these crossings as horizontal block crossings. For convenience, whenever we virtualize some crossings in $D^{n,p}$ and  $\tilde{D}^{n,p}$, labeling in the resulting diagram remains same. We denote $D_{q,r}^{n,p}$ as a virtual knot diagram  obtained form $D^{n,p}$ by virtualizing $q$ crossings at odd labelings and $r$ crossings at even labelings in the horizontal block.

\begin{lemma}
\label{lem-(n,m)}
If  $D$ is a virtual knot diagram $D^{n,p}_{q,r}$ with odd $p$, then  whenever $q-r>2$ or $q-r <0$, there exist at least one $k\in \mathbb{Z}\setminus\{0\}$ such that $J_{k}(D) =-1$ and  $J_{-k}(D)=0 $.
\end{lemma}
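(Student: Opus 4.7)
The plan is to assign a Cheng coloring to $D = D^{n,p}_{q,r}$, read off the index of every classical crossing, and then exhibit a single negative-sign crossing whose index $k$ is attained by no other crossing, while $-k$ is attained by none at all; by Equation~\eqref{1.eq2.2} this immediately gives $J_k(D)=-1$ and $J_{-k}(D)=0$.

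First I would analyze the base diagram $D^{n,p}$ (before any virtualization). Following the same pattern used to set up the diagrams $K^{m,p}$ and $K_{m,p}$ in Theorems~\ref{thm:(0,m)} and \ref{thm:(1,m)}, the horizontal block is a string of $p$ classical crossings whose labels $1,2,\dots,p$ alternate in parity; traversing the block shifts a Cheng label by $+1$ at each odd-labeled crossing and by $-1$ at each even-labeled crossing (or vice versa), so the cumulative shift through the whole block is $+1$ because $p$ is odd. From this shift one computes the index of every other classical crossing, in particular of the distinguished crossing analogous to $d$ in the proof of Theorem~\ref{thm:(1,m)}, whose index is determined by the label difference picked up across the block.

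Next I would track how the Cheng coloring is updated when one virtualizes $q$ odd-labeled and $r$ even-labeled crossings of the horizontal block. A virtualization kills exactly the $\pm 1$ contribution of that crossing to the cumulative label shift, so after virtualization the shift through the block changes from $+1$ to $1-(q-r)$. Consequently the index of the distinguished crossing $d$ becomes a linear function $k=k(q,r)$ of $q-r$, while the indices of the other classical crossings (the $c$-type crossings outside the block and the surviving crossings inside the block) take values in a short, explicitly enumerable set such as $\{-1,0,1,\pm(p-1)\}$ with small perturbations from $q-r$. I would then check that, precisely when $q-r>2$ or $q-r<0$, the value $k$ is disjoint from this short set, and $-k$ is disjoint from $\{k\}$ together with that short set. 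Combining with $\operatorname{sgn}(d)=-1$ gives the required $J_k(D)=-1$, $J_{-k}(D)=0$.

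The main obstacle is the bookkeeping: a careful case split on the sign of $q-r$ is needed, and one has to verify there is no accidental collision between $\pm k$ and some index in the small residual list. The forbidden window $q-r\in\{0,1,2\}$ is exactly the range in which $k$ coincides with an existing index value (either $\operatorname{Ind}(d)$ matches one of the base $\pm(p-1)$ or $\pm 1$ values, or $-k$ is realized by an unvirtualized block crossing), which is why the hypothesis excludes it. Once the list of indices is written out and compared against $k$ as a function of $q-r$, the verification reduces to a finite set of inequalities using $p$ odd, and the conclusion $J_k(D)=-1$, $J_{-k}(D)=0$ follows directly.
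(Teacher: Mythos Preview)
Your overall strategy---find a single negative crossing whose index $k$ is hit by nothing else and whose negative $-k$ is hit by nothing at all---is exactly right, but your model of the diagram $D^{n,p}$ is wrong, and the proof as sketched would not go through. The diagram $D^{n,p}$ has no crossing analogous to the $d$ of $K_{m,p}$: its classical crossings are $c_1,\dots,c_n$ together with the $p$ block crossings, and nothing else. After virtualizing $q$ odd-labeled and $r$ even-labeled block crossings one has $\operatorname{Ind}(c_i)=2i-n-q+r$ for $i=1,\dots,n$, so the $c_i$ indices form an arithmetic progression of length $n$ with step $2$, while the surviving block crossings all have index $\pm n$. There is no single ``$d$-type'' crossing whose index moves with $q-r$ against a fixed short list like $\{-1,0,1,\pm(p-1)\}$; that list describes $K_{m,p}$, not $D^{n,p}$.

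The paper instead exploits the \emph{ends} of the $c_i$ progression. When $q-r>2$ one takes $k=\operatorname{Ind}(c_1)=2-n-(q-r)<-n$, so $k$ avoids the block indices $\pm n$, and $\operatorname{Ind}(c_i)=-\operatorname{Ind}(c_1)$ would force $i=n+q-r-1>n$, impossible; hence $J_k(D)=\operatorname{sgn}(c_1)=-1$ and $J_{-k}(D)=0$. When $q-r<0$ one takes $k=\operatorname{Ind}(c_n)=n-(q-r)>n$ and argues symmetrically. The excluded window $0\le q-r\le 2$ is precisely where both endpoints $\operatorname{Ind}(c_1)$ and $\operatorname{Ind}(c_n)$ lie in $[-n,n]$ and the pairing $\operatorname{Ind}(c_i)=-\operatorname{Ind}(c_j)\Leftrightarrow i+j=n+q-r$ can be realized inside $\{1,\dots,n\}$. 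So the missing idea in your sketch is to use an extremal $c_i$ rather than a nonexistent $d$, and to compare against $\pm n$ and the $c_j$ progression rather than against $\pm(p-1),\pm 1$.
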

\begin{proof} Let $D$ be a virtual knot diagram $D^{n,p}_{q,r}$, where $p$ is an odd natural number. Then the index value of the classical crossings in the horizontal block of $D$ corresponds to odd and even labelings is $(-n)$ and $n$, respectively. Let $c_{1}, c_{2},\ldots,c_{n}$ denote the classical crossings of $D$ corresponding to $c_i$ crossings of $D^{n,p}$ as shown in Fig.~\ref{tw1}. Then 
 \[ \operatorname{Ind}(c_{i}) = (2i-n-q+r),\text{~for~} i=1,2,\ldots, n, \text{~and~} \]
 \[  \operatorname{Ind}(c_{i}) = -\operatorname{Ind}(c_{j}) \text{~if and only if~} i+j=n+q-r.\] 
\noindent It is evident that all the  $c_i$ crossings in $D$ have distinct index values. \\
If $q-r>2$, then $\operatorname{Ind}(c_{1})=2-n-(q-r)<-n$.\\
 Assume that there exist some $i\in \{2,3, \ldots,n\}$ such that $\operatorname{Ind}(c_{1})=-\operatorname{Ind}(c_{i})$. This implies that $i=n+q-r-1.$ As $q-r>2$, we have $i=n+q-r-1>n+1$, which is a contradiction to the fact that $i\leq n$. Consequently, there is no crossing other than $c_1$ in $D$, whose index value is $\pm \operatorname{Ind}(c_1$). Thus, for $k=\operatorname{Ind}(c_{1})$, we have
\[J_{k}(D) =\operatorname{sgn}(c_1)=-1 \text{~and~}  J_{-k}(D)=0. \]
If $q-r<0$, then Ind$(c_{n})=n-(q-r)>n$.  \\               
Assume that there exist some $i\in \{2,3, \ldots,n\}$ such that $\operatorname{Ind}(c_{n})=-\operatorname{Ind}(c_{i})$. This implies that $i=q-r<0,$ which is a contradiction to the fact that $i\geq 1$. Consequently, there is no crossing other than $c_n$ in $D$, whose  index value is $\pm \operatorname{Ind}(c_n$). Therefore, for $k=\text{Ind}(c_{n})$, we have
\[J_{k}(D) =\operatorname{sgn}(c_n)=-1 \text{~and~}  J_{-k}(D)=0. \]
Hence the desired result follows.
\end{proof}
\begin{lemma}\label{Mlemma}
If a diagram $D$ of a virtual knot $K$ satisfies the following two conditions:
\begin{enumerate}
\item[$(a)$] whenever $\operatorname{Ind}(c_i)=\operatorname{Ind}(c_j)$, then $\operatorname{sgn}(c_i)=\operatorname{sgn}(c_j)$, for any $c_i,c_j\in\mathcal{C}(D) $, and
\item[$(b)$] $\operatorname{Ind}(c)\neq 0$, for any  $c\in\mathcal{C}(D) $,
\end{enumerate}
then $D$ is a diagram of $K$ with minimum number of classical crossings. 
\end{lemma}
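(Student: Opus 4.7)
The plan is to exploit the fact that the $n$-th writhe $J_n$ is a virtual knot invariant, and to show that conditions $(a)$ and $(b)$ force $\mathcal{C}(D)$ to coincide with the natural lower bound $\sum_{n\neq 0}|J_n(D)|$ that the $J_n$'s provide on the number of classical crossings of any diagram of $K$.

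First I would unpack what conditions $(a)$ and $(b)$ say combinatorially. For each integer $n$, let $p_n(D)$ (resp.\ $q_n(D)$) denote the number of classical crossings of $D$ with index value $n$ and sign $+1$ (resp.\ $-1$), so that $J_n(D)=p_n(D)-q_n(D)$. Condition $(b)$ says that every classical crossing contributes to some $p_n$ or $q_n$ with $n\neq 0$, and condition $(a)$ says that for each fixed $n$ either $q_n(D)=0$ or $p_n(D)=0$. Hence for every $n\neq 0$ we have $|J_n(D)|=p_n(D)+q_n(D)$, and summing yields
\[
\mathcal{C}(D)=\sum_{n\neq 0}\bigl(p_n(D)+q_n(D)\bigr)=\sum_{n\neq 0}|J_n(D)|.
\]

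Next I would compare $D$ with an arbitrary diagram $D'$ of $K$. For $D'$ the analogous triangle inequality $p_n(D')+q_n(D')\geq |p_n(D')-q_n(D')|=|J_n(D')|$ holds for every $n$, and crossings of index $0$ only increase $\mathcal{C}(D')$, so
\[
\mathcal{C}(D')\geq \sum_{n\neq 0}|J_n(D')|.
\]
Since each $J_n$ is a virtual knot invariant (as recalled before Proposition~\ref{prop:boundsUbyS}), $J_n(D')=J_n(D)$ for all $n$. Combining this with the equality derived for $D$ gives $\mathcal{C}(D')\geq \sum_{n\neq 0}|J_n(D)|=\mathcal{C}(D)$, which is exactly the minimality claim.

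There is essentially no hard step here: the argument is bookkeeping once one recognizes $(a)$ and $(b)$ as the conditions that turn the generic triangle inequality $p_n+q_n\geq |J_n|$ into an equality on $D$ and eliminate index-$0$ crossings. The only place where one must be careful is to ensure that both sides of the comparison are treated uniformly: $D'$ may well have crossings of index $0$, and pairs of crossings with the same index but opposite signs, and both possibilities must be allowed for in the lower bound for $\mathcal{C}(D')$. Invariance of $J_n$ under all (classical and virtual) Reidemeister moves is what ties everything together.
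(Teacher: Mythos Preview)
Your argument is correct and is essentially the same as the paper's: both show that conditions $(a)$ and $(b)$ force $|\mathcal{C}(D)|=\sum_{n\neq 0}|J_n(D)|$, then use the generic bound $|\mathcal{C}(D')|\geq\sum_{n\neq 0}|J_n(D')|$ together with invariance of the $J_n$ to conclude minimality. The only cosmetic difference is notation (you track $p_n,q_n$ separately, the paper groups crossings into sets $C_k(D)$), and you should write $|\mathcal{C}(D)|$ rather than $\mathcal{C}(D)$ for the number of crossings to match the lemma statement's use of $\mathcal{C}(D)$ as a set.
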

\begin{proof}
Let $D$ be a diagram of a virtual knot $K$ satisfying the two conditions given in the hypothesis. To establish that $D$ is a diagram with minimum number of classical crossings, it is enough to show that $\displaystyle \sum_{k}\abs{J_k(D)}=\abs{\mathcal{C}(D)}$.\\
Let  $C_k(D)=\{c\in \mathcal{C}(D)| {\rm Ind}(c)=k\}$, for an integer $k\neq0$.
 Since number of crossings in $D$ are finite, there  exist $N\in \mathbb{N}$ such that $\abs{\operatorname{Ind}(c)}\leq N$, for all $c\in \mathcal{C}(D).$\\

\noindent This implies that there exists only finitely many $C_k(D)$, which are subsets of $\mathcal{C}(D)$. Since index value of a crossing is unique and there is no crossing with index zero, 
\begin{equation}\label{Eqq01}
C_k(D) \cap C_l(D) = \emptyset, \text{~for~} k \neq l \quad \text{~and hence~}\quad \mathcal{C}(D)=\displaystyle\cup_{k} C_k(D).\end{equation}
\noindent By hypothesis, since crossings of same {\it index} have same sign in $D$, we have  
\begin{equation}\label{Eqq02}\abs{J_k(D)}=\abs{ \displaystyle \sum_{\operatorname{Ind}(c)=k} \operatorname{sgn}(c)~~}=|C_k(D)|.
\end{equation}  
Equations~(\ref{Eqq01}) and (\ref{Eqq02}) implies that
\begin{equation}\label{Eqq1}
\displaystyle \sum_{k}\abs{J_k(D)}= \displaystyle \sum_{k} \abs{C_k(D)}=|\mathcal{C}(D)|.
\end{equation}

\noindent Observe that, if $D'$ is any diagram that is equivalent to $D$ with $m$ number of classical crossings, then
\begin{equation}\label{Eqq2}
\begin{split}
\displaystyle \sum_{k \neq 0}\abs{J_k(D')} &= \displaystyle \sum_{k \neq 0}~\left(\hspace{.1cm}\abs{ ~\displaystyle \sum_{~\mathclap{\substack{\operatorname{Ind}(c)=k\\ c\in \mathcal{C}(D')}}} \operatorname{sgn}(c)~~}\right)\leq \displaystyle \sum_{k \neq 0}~\left(\hspace{.4cm}\displaystyle \sum_{\mathclap{\substack{\operatorname{Ind}(c)=k\\ c\in \mathcal{C}(D')}}} \abs{\operatorname{sgn}(c)}\right)\\
&= \displaystyle \sum_{k \neq 0}\abs{C_k(D')}\leq \abs{\mathcal{C}(D')}=m,
\end{split}
\end{equation}
since the index value of each crossing is unique.
\noindent As the $n$-th writhe is a virtual knot invariant, from equation~ (\ref{Eqq1}) and equation~(\ref{Eqq2}), we have
\[|\mathcal{C}(D)|=\displaystyle \sum_{k }\abs{J_k(D)}=\displaystyle \sum_{k \neq 0}\abs{J_k(D')}\leq m.\]
Hence the  number of crossings in any diagram which is equivalent to $D$, are at least $|\mathcal{C}(D)|$. Thus $D$ is a diagram of $K$  with minimum number of classical crossings. Hence the result follows.
\end{proof}

\begin{theorem}\label{thm:(1,0)knot}
There exist infinitely many virtual knots with unknotting index $(1,0)$. 
\end{theorem}
\begin{proof}

Let $p$ and $q$ be any two positive integers with odd $p$ satisfying $p>q>3$.  
To establish this result, we construct a diagram $D$ obtained from the virtual knot diagram $D^{1,p}$ as shown in Fig.~\ref{tw1}, by virtualizing $q$ crossings with odd labelings in the horizontal block. It is evident that $D$ is the diagram $D^{1,p}_{q,r}$ with $r=0$. Let $K$  be the virtual knot presented by the diagram $D$ and by hypothesis $q-r=q>3$. Therefore, by using Lemma~\ref{lem-(n,m)} and Proposition~\ref{prop:boundsUbyS}(a), we have
\begin{equation}\label{Eq1(1,0)}
U(K)\geq (1,0).
\end{equation}

\begin{figure}[!ht] 
{\centering
 \includegraphics[scale=0.5]{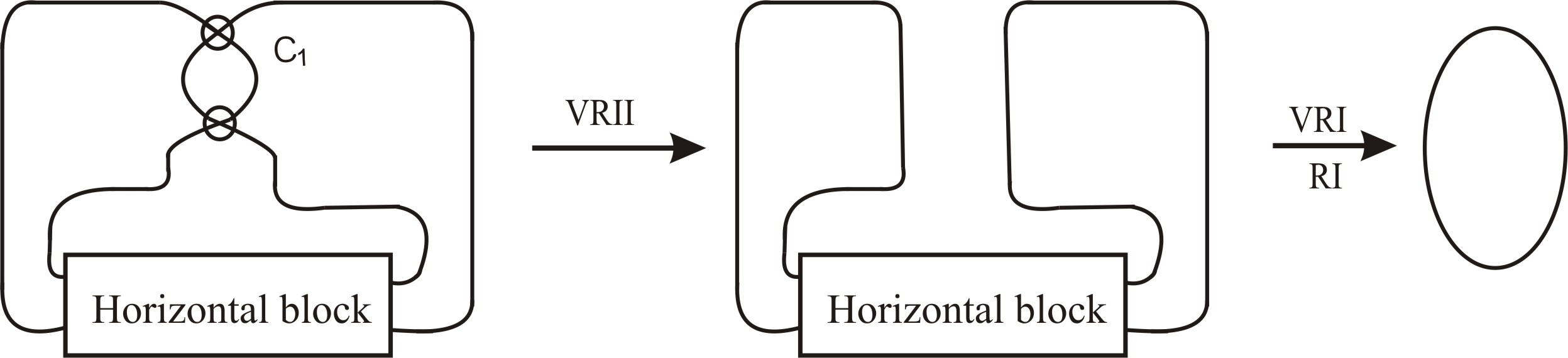} 
  \caption{A diagram of the trivial knot. }\label{fig10}
 } \end{figure}
   
\noindent For upper bound, observe that by virtualizing crossing $c_1$ in the diagram $D^{1,p}_{q,0}$ the resulting diagram becomes a diagram of the trivial knot as shown in Fig.~\ref{fig10}. Hence 
\begin{equation}\label{Eq2(1,0)}
U(K)\leq U(D)\leq (1,0).
\end{equation}
By equation~(\ref{Eq1(1,0)}) and equation~(\ref{Eq2(1,0)}), we have $U(K)=(1,0)$.

\noindent It is evident from the proof of Lemma~\ref{lem-(n,m)} that the index values of the classical crossings in the horizontal block of $D(=D^{1,p}_{q,0})$ corresponds to odd and even labeling are $(-1)$ and 1, respectively. Moreover, the index value of crossing $c_1$ in the diagram $D$ is $\operatorname{Ind}(c_{1}) = (1-q)$, and sign of all the crossings of $D$ is $(-1)$.\\
\noindent Thus,  the diagram $D(=D^{1,p}_{q,0})$ has no crossing with zero index and crossings with the same index values having the same sign value. By Lemma~\ref{Mlemma}, $D$ is a diagram of $K$ with minimum number of classical crossings. The total number of classical crossings in $D$ are $p-q+1$. \\
Consider two odd positive integers $p_1\neq p_2$ and an integer $q$, satisfying
 \[ p_1>q>3 \quad \text{and}\quad p_2>q>3.\]
Let $K_{i}$ be the virtual knot corresponding to the diagram $D^{1,p_i}_{q,0}$, $i\in\{1,2\}$. Clearly, the unknotting index of $K_1$ and $K_2$ is $(1,0)$. Moreover, $D^{1,p_i}_{q,0}$ is a diagram of $K_i,~i\in\{1,2\}$ with  minimum number of classical crossings and
\[|\mathcal{C}(D^{1,p_1}_{q,0})|=p_1-q+1\neq p_2-q+1=|\mathcal{C}(D^{1,p_2}_{q,0})|.\] 
Therefore, the virtual knots corresponding to $D^{1,p_1}_{q,0}$ and $D^{1,p_2}_{q,0}$ are non equivalent. 
Moreover, there exists infinitely many odd integers $p$ satisfying $p>q$ for any integer $q>3$. Thus, there exists infinitely many virtual knots with unknotting index $(1,0)$. Therefore, the result holds.
\end{proof}

\begin{proposition}
\label{thm-(n,m)}
If $D$ is a diagram of a virtual knot $K$, that is obtained by virtualizing $m=q+r$ number of classical crossings in the horizontal block of $D^{n,p}$, see  Fig.~\ref{tw1}, where $p$ is an odd positive integer, and $q$ and $r$ are the number of crossings virtualized at odd and even labelings, respectively, then 

\[U(D)=\begin{cases}
\Big(r-q,\Big \lfloor \dfrac{p+n-2r}{2}\Big\rfloor \Big),          &\text{if}~~-n<q-r<0,\\

\Big(0, \Big\lfloor \dfrac{p+n-m}{2}\Big\rfloor\Big),&  \text{if}~~0\leq q-r\leq 2,  \\

\Big (q-r-2,\Big \lfloor \dfrac{p+n-2q+2}{2}\Big\rfloor\Big),          &\text{if}~~2<q-r< n+2,\\ 
   (n,0),          & \text{ Otherwise. }
   \end{cases}\] 
 Moreover, for $0\leq q-r\leq 2$, $U(D)=U(K).$
  \end{proposition}
\begin{proof} Let $D$ be a virtual knot diagram of a virtual knot $K$ that is obtained  by virtualizing $q+r$ number of classical crossings in the horizontal block of $D^{n,p}$ with odd $p$ as shown in  Fig.~\ref{tw1}, where $q$ and $r$ are the number of  crossings virtualized at odd and even labelings, respectively. Observe that $D$ is exactly the virtual knot diagram $D_{q,r}^{n,p}$.   Let $c_{1}, c_{2},\ldots,c_{n}$ denote the classical crossings of $D$ corresponding to $c_i$ crossings of $D^{n,p}$ as shown in Fig.~\ref{tw1}.  Then  \[ \operatorname{Ind}(c_{i})=(2i-n-q+r), \text{~for~} i=1,2,\ldots,n,\quad \text{and},\]
\[ \operatorname{Ind}(c_{i}) = -\operatorname{Ind}(c_{j}) \text{~if and only if~} i+j=n+q-r.\] 
The index value of the classical crossings in the horizontal block of $D$ corresponding to odd and even labelings is $(-n)$ and $n$, respectively.\\
 With these observations, we first prove the result for  $0\leq q-r \leq 2$. \\
\noindent The total number of classical crossings in $D$ are $p+n-m.$ When $p+n-m$ is even, then all classical crossings in $D$ have non-zero {\it index}. Otherwise, there exist only one crossing in $D$ whose {\it index} value is zero. Since sign of each crossing in $D$ is $-1$, 
\[ \displaystyle \sum_{k \neq 0}\abs{J_{k}(D)/2}= \Big \lfloor \frac{p+n-m}{2}\Big \rfloor .\]
 Hence by using Proposition~\ref{prop:boundsUbyS}$(b)$, we have 
\begin{equation}\label{eq:thm(n,m)1}
U(K)\geq \Big(0, \Big\lfloor\frac{p+n-m}{2} \Big\rfloor\Big).
\end{equation} 
\noindent Apply crossing change operation at all the crossings in the horizontal block labeled with even integers and at $c_{n}, c_{n-1}, \ldots, c_{\lfloor\frac{n+2}{2}\rfloor}$, if $q-r=0$, or 
  at $c_{1}, c_{2}, \ldots, c_{\lfloor \frac{n}{2}\rfloor}$, if $q-r=1$, or at $c_{n}, c_{n-1}, \ldots, c_{\lfloor\frac{n+4}{2}\rfloor}$, if $q-r=2$. Then the resulting diagram is a diagram of the trivial knot and the total number of crossing changes is equal to $ \big \lfloor\frac{p+n-m}{2}\big\rfloor .$
Hence 
\begin{equation}\label{eq:thm(n,m)1b}
U(K)\leq \Big(0, \Big\lfloor \dfrac{p+n-m}{2}\Big\rfloor\Big).
\end{equation} 
Equation~(\ref{eq:thm(n,m)1}) and equation~(\ref{eq:thm(n,m)1b}) implies that,
\[U(K)=\Big(0, \Big\lfloor \dfrac{p+n-m}{2}\Big\rfloor\Big).\]

 \noindent When $2<q-r<n+2$, it is evident that by Lemma~\ref{lem-(n,m)} and Proposition~\ref{prop:boundsUbyS}$(a)$, we have $U(K)\geq (1,0)$. Hence virtualization is required to deform $D$ into the trivial knot.  

\noindent Let $D'$ be the diagram obtained from $D$ by virtualizing $t=n_1+q_1+r_1$ number of crossings, where $n_1, q_1$ and $r_1$ be the number of $c_i's$, odd labeling and even labeling crossings that are virtualized in $D$. Then $D'$ is  equivalent to the virtual knot diagram $D^{n',p}_{q',r'}$, where $n'=n-n_1, q'=q+q_1$ and $r'=r+r_1$.

\noindent If $t<q-r-2$, then $n_1+2q_1+2<(q+q_1)-(r+r_1)$, i.e., $2<q'-r'$. If $t=q-r-2$ and $q'-r'\neq 2$, then also $q'-r'>2$.\\
By Lemma~\ref{lem-(n,m)}, there exist at least one $k$ such that 
$J_{k}(D')=-1$ and  $J_{-k}(D')=0 $. From Proposition~\ref{prop:boundsUbyS}$(a)$ and Remark~\ref{lem:flat}, the flat projection of $D'$ is non-trivial.  
In case $t=q-r-2$ and $q'-r'=2$, there are $p+n-2q+2$ number of classical crossings in $D'$ and 
\[ \displaystyle \sum_{k \neq 0}\abs{J_{k}(D')/2}= \Big \lfloor \frac{p+n-2q+2}{2}\Big \rfloor \text{~ (as in the case~} 0 \leq q-r\leq 2).\] Therefore, we have
\begin{equation}\label{eq:thm(n,m)2}
 U(D)\geq \Big(q-r-2, \Big\lfloor \frac{p+n-2q+2}{2}\Big\rfloor\Big).
\end{equation}
\noindent For the reverse inequality,  virtualize $q-r-2$ number of crossings labeled with even integers in the horizontal block of $D$. Then in the resulting diagram $D'$, we have $q'-r' =2$, and hence $D'$ satisfies the condition $0 \leq q'-r'\leq 2$. Therefore, $ U(D')= \Big(0, \Big\lfloor \dfrac{p+n-2q+2}{2}\Big\rfloor\Big)$ and hence
\begin{equation}\label{eq:thm(n,m)3}U(K)\leq U(D)\leq \Big(q-r-2, \Big\lfloor \dfrac{p+n-2q+2}{2}\Big\rfloor\Big).
\end{equation} 
Equation~(\ref{eq:thm(n,m)2}) and equation~(\ref{eq:thm(n,m)3}) implies
\[U(D)= \Big(q-r-2, \Big\lfloor \dfrac{p+n-2q+2}{2}\Big\rfloor\Big).\]

\noindent In the case when $q-r\geq n+2$ or $q-r \leq -n$, observe that either $q-r>2$ or $q-r < 0$.\\
 By Lemma~\ref{lem-(n,m)} and Proposition~\ref{prop:boundsUbyS}$(a)$, we have $U(K)\geq (1,0)$. 

\noindent Observe that by virtualizing $c_{1},c_{2}, \ldots,c_{n}$ crossings in $D$, the resulting diagram becomes a diagram of the trivial knot. Thus 
\begin{equation}\label{eq:thm(n,m)4}
U(K)\leq U(D)\leq(n,0).\end{equation}

\noindent Now we prove that, by virtualizing less than $n$ number of crossings in $D$, the flat virtual knot corresponding to the resulting virtual knot is always non-trivial.

\noindent Let $D'$ be the diagram obtained from $D$ by virtualizing $t=n_1+q_1+r_1$ number of crossings, where $n_1, q_1,$ and $r_1$ be the number of $c_i's$, odd labeling and even labeling crossings that are virtualized in $D$, respectively. Then $D'$ is equivalent to the virtual knot diagram  $D^{n',p}_{q',r'}$, where $n'=n-n_1, q'=q+q_1$ and $r'=r+r_1$.\\
 If $q-r\geq n+2$ and $t<n$, then $t<n\leq q-r-2$ and hence $2< q'-r'$. \\
  If $q-r\leq -n$ and $t<n$, then $t<n\leq r-q$ and hence $ q'-r'<0$. In both the cases, by Lemma~\ref{lem-(n,m)} and Proposition~\ref{prop:boundsUbyS}$(a)$, the flat projection of $D'$ is non-trivial. Therefore, {\bf $U(D)\geq (n, 0)$} and hence by equation~(\ref{eq:thm(n,m)4}), we have
\[ U(D)= (n, 0).\]   

\noindent In case when $-n<q-r<0$, we use  similar arguments as given the case of $2<q-r<n+2$ of Theorem~\ref{thm-(n,m)}, to conclude
   $$ U(D)= (r-q,\big \lfloor \frac{p+n-2r}{2}\big\rfloor).$$
 Hence the proof of the theorem follows.
\end{proof}

\begin{remark}{\rm
If $D$ is a virtual knot diagram obtained from $D^{n,p}$ (or $\tilde{D}^{n-1,p}$) by virtualizing some $c_{j},~1\leq j\leq n$, crossing, then $D$ is equivalent to $D^{n-1}$ (or $\tilde{D}^{n-1,p}$). Thus, using Proposition~\ref{thm-(n,m)}, we can find  unknotting index of any virtual knot diagram obtained by virtualizing some crossings in $D^{n,p}$ (or $\tilde{D}^{n,p}$).} 
\end{remark}

\begin{figure}[!ht] 
{\centering
\subfigure[Virtual knot diagram $D^{n,p,m}$.]{\includegraphics[scale=0.5]{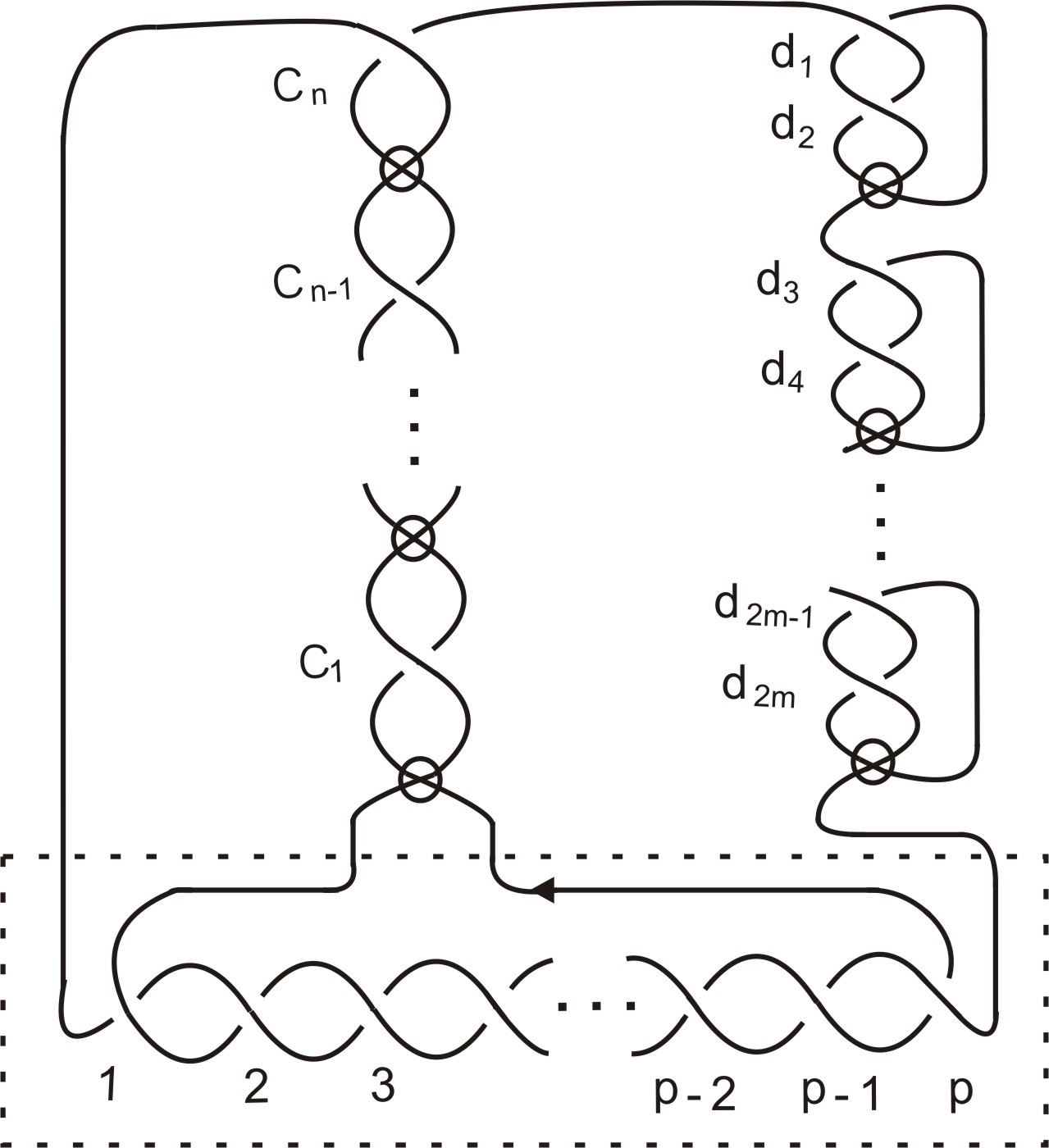}\label{mnu}}
  \hspace{2cm}
 \subfigure[Virtual knot Diagram.]{ \includegraphics[scale=0.6]{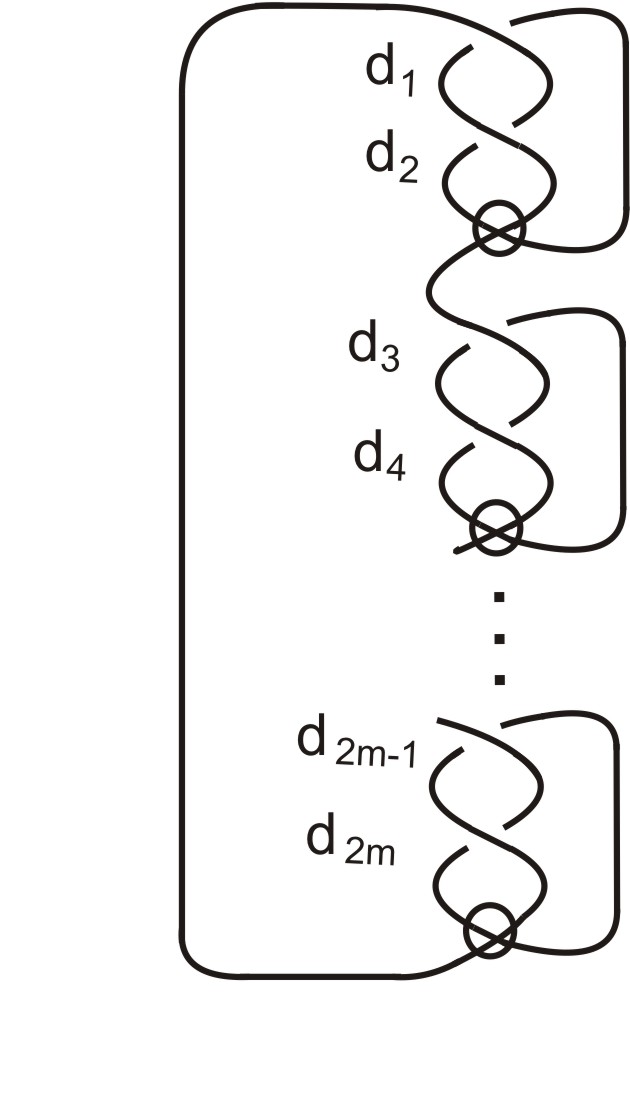}\hspace{.5cm} \label{mnu2} }
\caption{ }
 } \end{figure}
 
\noindent Before proving the Theorem ~\ref{thm:(n,m)knots}, we define $D^{n,p,m}_{q,r,t}$ as a virtual knot diagram obtained from the diagram $D^{n,p,m}$ as shown in Fig.~\ref{mnu} by virtualizing $q+r+t$ number of crossings, where $t$, $q$, $r$ are the number of $d_i's$ crossing, odd and even labeling crossings in the horizontal block, respectively. It is evident that for $m=0$,  $D^{n,p,m}$ is the virtual knot diagram $D^{n,p}$ as shown in Fig.~\ref{ttw}.
 
\begin{theorem}\label{thm:(n,m)knots}
For any $n,m \in \mathbb{N}$, there exists an infinite family of distinct virtual knot diagrams with unknotting index $(n,m)$.
\end{theorem}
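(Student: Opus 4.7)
The plan is to consider the virtual knot $K=K^{n,p,m}$ represented by the diagram $D=D^{n,p,m}$ of Fig.~\ref{mnu}, for an odd positive integer $p$ chosen large enough to avoid accidental coincidences among crossing indices. The diagram is designed to marry the structure of $D^{n,p}$, whose $c_i$-block forces $n$ virtualizations when it sits in the extremal regime of Theorem~\ref{thm-(n,m)}, with a block of $2m$ crossings $d_1,\dots,d_{2m}$ analogous to those appearing in Theorem~\ref{thm:(0,m)}. The first step is to compute the indices and signs of all classical crossings of $D$ and verify that each pair $(d_{2i-1},d_{2i})$ contributes to writhes $J_{p-1}$ and $J_{-(p-1)}$ with $J_{p-1}(D)=J_{-(p-1)}(D)=-m$, and that the $d_i$-indices $\pm(p-1)$ are disjoint from every index appearing in the $c_i$- or horizontal blocks.

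For the upper bound $U(K)\leq(n,m)$ I would exhibit an explicit unknotting sequence: virtualize $c_1,\dots,c_n$, which (by the remark following Theorem~\ref{thm-(n,m)}) collapses the $c_i$-block and reduces $D$ to a diagram that is, up to flat Reidemeister moves, the diagram $K^{m,p}$ of Theorem~\ref{thm:(0,m)}; then apply crossing changes at $d_2,d_4,\dots,d_{2m}$ to reach the trivial knot exactly as in the proof of Theorem~\ref{thm:(0,m)}.

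The lower bound $U(K)\geq(n,m)$ is the main obstacle. For the first coordinate I would argue as follows: any diagram $D'$ obtained from $D$ by virtualizing $s<n$ crossings is, after relabelling, of the form $D^{n',p,m}_{q',r',t'}$ whose $c$-block still satisfies the hypothesis of Lemma~\ref{lem-(n,m)}, yielding some $k\ne 0$ with $J_k(D')=-1$ and $J_{-k}(D')=0$; Proposition~\ref{prop:boundsUbyS}$(a)$ and Remark~\ref{lem:flat} then force $U(D')\geq(1,0)$, and iterating gives $U(K)\geq(n,0)$. For the second coordinate I would show that after any $n$ virtualizations producing a diagram $D''$, the surviving $d_i$-crossings still have indices $\pm(p-1)$ distinct from every other surviving index (this is where the largeness of $p$ enters), so the contribution $|J_{p-1}(D'')|+|J_{-(p-1)}(D'')|=2m$ persists; Proposition~\ref{prop:boundsUbyS}$(b)$ then demands at least $m$ further crossing changes. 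Lemma~\ref{Mlemma} will be invoked to confirm that $D$ has the minimum number of classical crossings among representatives of $K$, so that all index bookkeeping can be performed on $D$ itself. The delicate point will be enumerating the possible interactions between the $n$ virtualizations and the three blocks, in the spirit of Cases~1--6 in the proof of Theorem~\ref{thm:(1,m)}.

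Finally, infiniteness is obtained by varying $p$: for two odd integers $p_1\ne p_2$ the invariant $J_{p_1-1}$ takes distinct values on $K^{n,p_1,m}$ and $K^{n,p_2,m}$, so these are inequivalent virtual knots, yielding an infinite family each with unknotting index $(n,m)$.
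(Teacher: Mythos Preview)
Your plan has a genuine gap at the very first step: you propose to work with the diagram $D=D^{n,p,m}$ itself, but that diagram does \emph{not} sit in the extremal regime of Theorem~\ref{thm-(n,m)}. With no horizontal-block crossings virtualized you have $q=r=0$, so $q-r=0$ lies in the range $0\le q-r\le 2$, and Theorem~\ref{thm-(n,m)} gives the $D^{n,p}$-part unknotting index $\big(0,\lfloor (p+n)/2\rfloor\big)$: no virtualizations are forced at all. Consequently your first-coordinate argument collapses: after $s<n$ virtualizations the resulting $D^{n',p,m}_{q',r',t'}$ need not satisfy the hypothesis of Lemma~\ref{lem-(n,m)} (for instance, virtualize only $c_i$'s or only $d_i$'s and you still have $q'=r'=0$), so you cannot conclude $J_k(D')\ne J_{-k}(D')$.

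The paper repairs exactly this by \emph{first} virtualizing $q$ odd-labeled horizontal crossings with $p>q>n+2$, so that the starting diagram already has $q-r=q>n+2$; then any further $s\le n$ virtualizations with $n_1<n$ still leave $q'-r'>2$, and Lemma~\ref{lem-(n,m)} applies. A second consequence of this choice is that the $d_j$-indices become $\pm 1$ (not $\pm(p-1)$ as you assert), and the second-coordinate bound is obtained only in the single case $n_1=n$, where the diagram reduces to Fig.~\ref{mnu2} with $\sum_{k\ne 0}|J_k|=2m$; your claim that the $d_i$-contribution ``persists after any $n$ virtualizations'' is not needed and would in fact fail if some of the $n$ virtualizations hit the $d_i$'s. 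Finally, the paper's infiniteness argument uses Lemma~\ref{Mlemma} to compare minimal crossing numbers for different $p$, rather than a writhe comparison; your $J_{p-1}$ approach would also distinguish them, but only once the correct diagram is in hand.
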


\begin{proof}
Let $n,m\in \mathbb{N}$. Consider $q\in \mathbb{N}$ and $p$ be any odd integer such that $p>q>n+2$. Construct a virtual knot diagram $D$ from the diagram $D^{n,p,m}$ given in Fig.~\ref{mnu}, by virtualizing $q$ crossings with odd labelings in the horizontal block.  Assume that $U(D)=(n_D,m_D)$. For our convenience, labeling of crossing in $D$ remains same as in the diagram $D^{n,p,m}$ shown in Fig.~\ref{mnu}.  Then the {\it index values} of the crossings $c_1, c_2,\ldots, c_n$ and $d_1, d_2,\ldots, d_{2m}$  in the diagram $D$ are given as,
\[{\rm Ind}(c_i)=2i-n-q, {\rm ~for~} i=1,2, \ldots,n, \]
\[\quad {\rm and} \quad {\rm Ind}(d_j)=(-1)^{j+1}, {\rm ~for~} j=1,2,\ldots, 2m.\]
The {\it index values} of the crossings of $D$ in the horizontal block corresponding to odd and even labelings are $(-n)$ and $n$, respectively.
\noindent It is evident that the flat projection $\overline{D}$ of $D$ is equivalent to the flat projection of the diagram $D_{q,0}^{n,p}$, which is non-trivial by Lemma~\ref{lem-(n,m)}, Proposition \ref{prop:boundsUbyS}(a) and Remark \ref{lem:flat}. Hence $\overline{D}$ is non-trivial and by Remark~\ref{lem:flat}, $U(K)>(1,0)$, where $K$ is the virtual knot corresponding to diagram $D$. Therefore, virtualization is required to deform $D$ to a trivial knot diagram.

\noindent Let $K'$ be the virtual knot presented by a diagram $D'$ which is obtained from $D$ by virtualizing $s=n_1+t+q_1+r_1$ number of crossings, where $n_1, t, q_1 $ and $r_1$ be the number of $c_i's$,  $d_i's$, odd labeling and even labeling crossings that are virtualized in $D$, respectively. 

\noindent If  $n_1<n$ and $s \leq n$, then $-r_1\geq n_1+q_1+t-n$. As $q > n+2$ is given, 
\begin{align*}
(q+q_1)-r_1 & > (n+2)+q_1-r_1 \\
& \geq (n+2)+q_1+(n_1+q_1+t-n)\\
& =2+2q_1+t+n_1\geq 2. 
\end{align*}
It is evident that the flat projection $\overline{D'}$ of $D'$ is equivalent to the flat projection of $D^{n', p}_{q', r'}$ with $q'-r'> 2$, where $q'=q+q_1, r'=r_1$ and $n'=n+n_1$. By Lemma~\ref{lem-(n,m)}, Proposition~\ref{prop:boundsUbyS} and Remark~\ref{lem:flat}, the flat projection of $D^{n',p}_{q',r'}$ is non-trivial and hence $\overline{D'}$ is a non-trivial flat projection. Again by Lemma~\ref{lem}, $U(K')\geq(1,0)$  which implies $n_D>s$ whenever $n_1<n$ and $s \leq n$.
  
\noindent  When $n_1=n=s$, the diagram $D'$ is equivalent to the diagram shown in Fig.~\ref{mnu2}. Thus, the flat projection of $\overline{D'}$ of $D'$ is equivalent to the flat projection of the diagram shown in Fig.~\ref{mnu2} which is trivial. Hence $\overline{D'}$ is a trivial flat projection.\\
\noindent Thus, minimum $n$ number of virtualizations are required in $D$ such that the flat projection of the resulting diagram is trivial. Therefore, $n_D=n$ and $D'$ is our desired diagram obtained from $D$ by virtualizing $n$ crossings with $n_1=n$. Since diagram $D'$ is equivalent to the diagram shown in Fig.~\ref{mnu2},
$\displaystyle \sum_{k \neq 0}\abs{J_{k}(D')}= 2m$. 

\noindent By Remark~\ref{lem:flat} and Proposition~\ref{prop:boundsUbyS}, we have 
\[U(K')\geq U(D')\geq (0, m), \text{~which implies~}\]  
\begin{equation}\label{eq1 n,m}
 U(D)=(n_D,m_D)\geq (n,m).
\end{equation} 

\noindent To obtain the upper bound, virtualize  crossings $c_1,c_2,\ldots,c_n$, and apply crossing change operations at crossings $d_2,d_4,\ldots, d_{2m}$ in $D'$. Then the resulting diagram is a diagram of the trivial knot, see Fig.~\ref{tmnu}, and hence
\begin{equation}\label{eq2 n,m}
 U(D)\leq (n,m).
 \end{equation}
 Equation~(\ref{eq1 n,m}) and equation~(\ref{eq2 n,m}) implies that $U(D)=(n,m)$.\\
\noindent Observe that,  all the crossings of $D$ are of same sign and having non-zero index. By Lemma~\ref{Mlemma}, $D$ is a diagram of $K$ having minimum number of classical crossings. \\
Consider odd positive integers $p_1\neq p_2$, satisfying 
\[p_1>q> n+2 \quad \text{and} \quad p_2>q> n+2. \]
 Let $D_i,~i\in\{1,2\}$ be the virtual knot diagram obtained from $D^{n,p_i,m},~i\in\{1,2\}$ by virtualizing $q$ number of crossing labeled with odd integers in the horizontal block. Clearly, the unknotting index of virtual knot diagram $D_i,~i\in\{1,2\}$ is $(n,m)$. Let $K_i$ be the virtual knot corresponding to diagram $D_i$, $i\in \{1,2\}$. Then by Lemma~\ref{Mlemma}, it is evident that  $D_i$ are a diagram of $K_i$ with minimum number of classical crossings. The total  number of classical crossings in $D_i$ are $(n+2m+p_i-q)$.  For $p_1\neq p_2$, $n+2m+p_1-q \neq n+2m+p_2-q$. Therefore, the diagrams $D_1$ and $D_2$ are not equivalent. Thus, the virtual knots $K_1$ and $K_2$ corresponding to $D_1$ and $D_2$, respectively, are not equivalent.  \\ 
For fixed $n,m \in \mathbb{N}$ and any $q> n+2$, there exists infinitely many odd integers $p$ satisfying $p>q> n+2$.
Hence, there exists infinitely many distinct virtual diagrams with unknotting index $(n,m)$. Therefore, the result holds.
\end{proof}
\begin{figure}[!ht] {\centering
\includegraphics[scale=0.5]{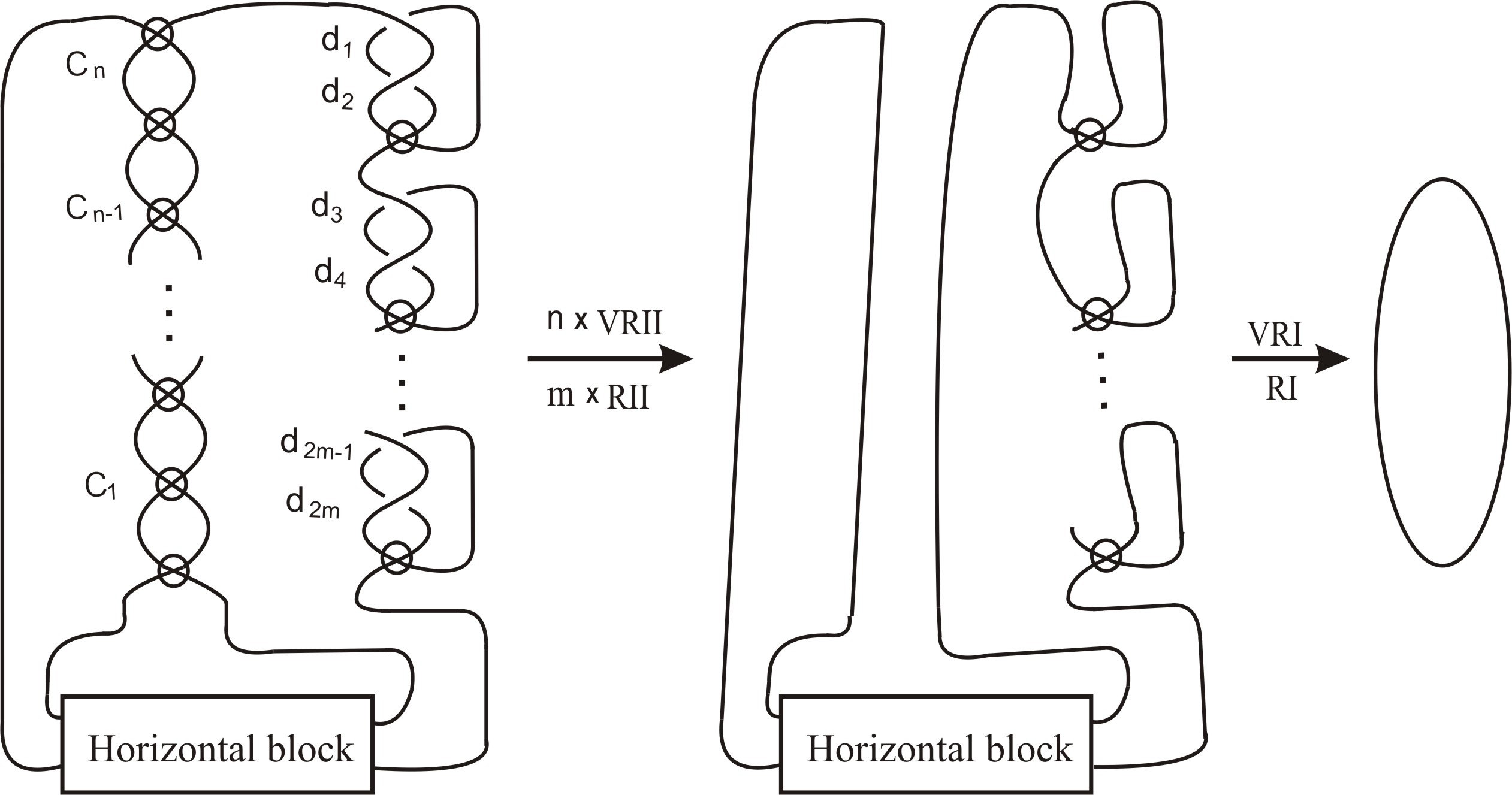}
 \vspace{.4cm}
\caption{A diagram of the trivial knot.} \label{tmnu}}\end{figure}
 \begin{figure}[!ht] 
{\centering
\includegraphics[scale=0.55]{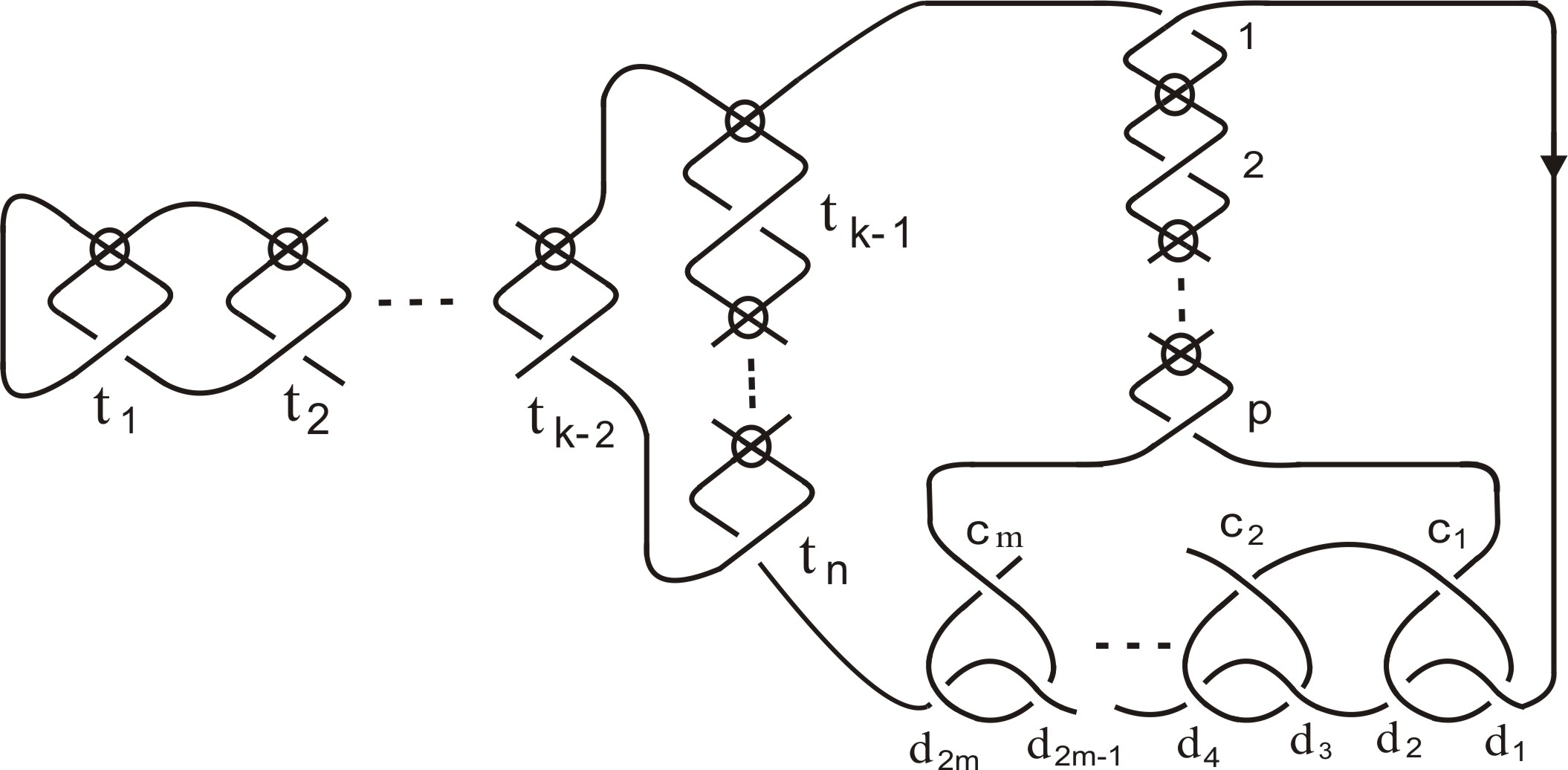} 
 \caption{Virtual link diagram $D=D_1\cup D_2\cup \cdots\cup D_k$. }
\label{figrf10}}
\end{figure}

\begin{remark}\label{rmk:(n,m)knot}
Let $D$ be a virtual knot diagram obtained from the diagram $D^{0,p,m}$ given in Fig.~\ref{mnu}, by virtualizing $q$ crossings with odd labelings in the horizontal block with $p>q>2$. It is evident that the diagram $D$ is equivalent to the diagram as shown in Fig.~\ref{mnu2} and $J_{-1}(D)=J_{1}(D)=-m$, $J_{k}(D)=0,~k\neq \pm 1$. If $K$ is the virtual knot represented by diagram $D$, then using Proposition~\ref{prop:boundsUbyS}(b), we have
 \[U(K)\geq \big(0, \dfrac{1}{2}\displaystyle \sum_{k \neq 0} |J_k(D)|\big)=\big(0,\dfrac{1}{2}(|J_{-1}(D)|+|J_{1}(D)|)\big)=(0,m).\]
 Observe that by applying crossing change operation at crossings $d_2, d_4,\ldots,d_{2m}$ in $D$, the resulting diagram becomes a diagram of the trivial knot. Thus $U(K)\leq U(D)\leq (0,m)$ and hence $U(K)=(0,m).$ 
 \end{remark}
\begin{theorem}\label{thm:(n,m)link}
 For any pair $(n,m)$ of positive integers, there exist infinitely many $k$-component virtual links with unknotting index $(n,m)$, where  $k$ is any integer $>1$.
\end{theorem}
\begin{proof}  Let $n,m, k, p \in \mathbb{N}$ with $p \geq 2$ and $k>1$. We prove this result by considering the two cases $k\leq  n$ and $k> n$.\\
{\underline{\it Case 1.}} Suppose that $k \leq n$. Consider a virtual link $L$ presented by diagram $D=D_1\cup D_2\cup \ldots\cup D_k$,  where $D_k=K^{m,p}$, as shown in Fig.~\ref{figrf10} and Fig.~\ref{figrf1}, respectively.  Let $t_1,t_2,\ldots,t_n$ be the linking crossings of $D$ as shown in Fig.~\ref{figrf10}. It is easy to see that there is no linking crossing between $D_i$ and $D_j$ when $j\neq i+1$. For $1\leq i \leq k-2 $, there is only one linking crossing, crossing $t_i$, between $D_i$ and $D_{i+1}$. Note that $t_{k-1},t_{k},\ldots, t_{n}$ are linking crossings between $D_{k-1}$ and $D_k$. Thus the span value of the diagram $D$ is given as
\begin{align*}
\operatorname{span}(D)&=\displaystyle \sum_{i\neq j} \operatorname{span}(D_i \cup D_{j})=\displaystyle \sum_{i=1}^{k-1} \operatorname{span}(D_i \cup D_{i+1})\\
&=\displaystyle \sum_{i=1}^{k-2} \operatorname{span}(D_i \cup D_{i+1})+\operatorname{span}(D_{k-1} \cup D_{k})\\
&=\left(\displaystyle \sum_{i=1}^{k-2} 1 \right)+(n-k+2)=n.
\end{align*}

\noindent By using Theorem~\ref{thm:bound_links}, minimum $n$ number of virtualizations are required to deform $D$ into the trivial link. These $n$ virtualizations must be at linking crossings. Since there are exactly $n$ linking crossings in the diagram $D$, it is evident that 
$\displaystyle \sum_{i \neq j} \ell_{D_{i} \cup D_{j}}=0$.
By using Theorem~\ref{thm:bound_links} and  equation~\ref{Eq0(0,m)}, we have 
\begin{align*} \hspace{2cm} U(L) &\geq \Big(\operatorname{span}(D), \displaystyle \sum_{i \neq j} \ell_{D_{i} \cup D_{j}} + \frac{1}{2} \sum_{i=1}^{k} \sum_{N \in \mathbb{Z}\setminus \{0\}} |  J_N(D_{i}) | \Big)\\
(18)\hspace{2.6cm}& =\left(n,0+ \frac{1}{2} \sum_{N \in \mathbb{Z}\setminus \{0\}} \abs{ J_N(K^{m,p})}\right)=(n,m). \hspace{2.6cm}
\end{align*}

\noindent For upper bound, by virtualizing all linking crossings in $D$ followed by crossing change operations at $d_2,d_4,\ldots,d_{2m}$ crossings, the resulting diagram becomes a diagram of the trivial link. Hence 
\[(19)\hspace{4cm} U(L)\leq U(D)\leq (n,m).\hspace{4cm} \]
Equation~(18) and equation~(19) implies that $U(L)=(n,m).$ \\
As discussed in the proof of Theorem~\ref{thm:(0,m)} that  the virtual knot diagrams $K^{m,p}$ for different values of $p\geq 2$, represent non-equivalent virtual knots. Thus different values of $p\geq 2$, yields distinct $k$-component virtual links whose unknotting index is $(n,m).$\\
 Moreover, there exist infinitely many positive integers $p\geq 2$. Thus, there exist infinitely many $k$-component virtual links with unknotting index $(n,m)$.\\

\noindent {\underline{\it Case 2.}} Suppose that $k > n$. Take any $k'$ such that $k'\leq n$. Then the result holds for $k'$ as discussed in {\it Case 1} of Theorem~\ref{thm:(n,m)link}. Let $L'$ be the $k'$- component link with unknotting index $(n,m)$ as shown in Fig.~\ref{figrf10}. Consider a $k$-component virtual link, say $L$, which is a disjoint union of $L'$ and $(k-k')$ trivial knots. Thus, $L$ is our desired $k$-component virtual link.  We can construct infinitely many $k$-component virtual links with unknotting index $(n,m)$ corresponding to infinitely many $k'$-component virtual links with unknotting index $(n,m)$. Hence the proof of the result follows. 
\end{proof}

\begin{figure}[!ht] 
{\centering
\includegraphics[scale=0.55]{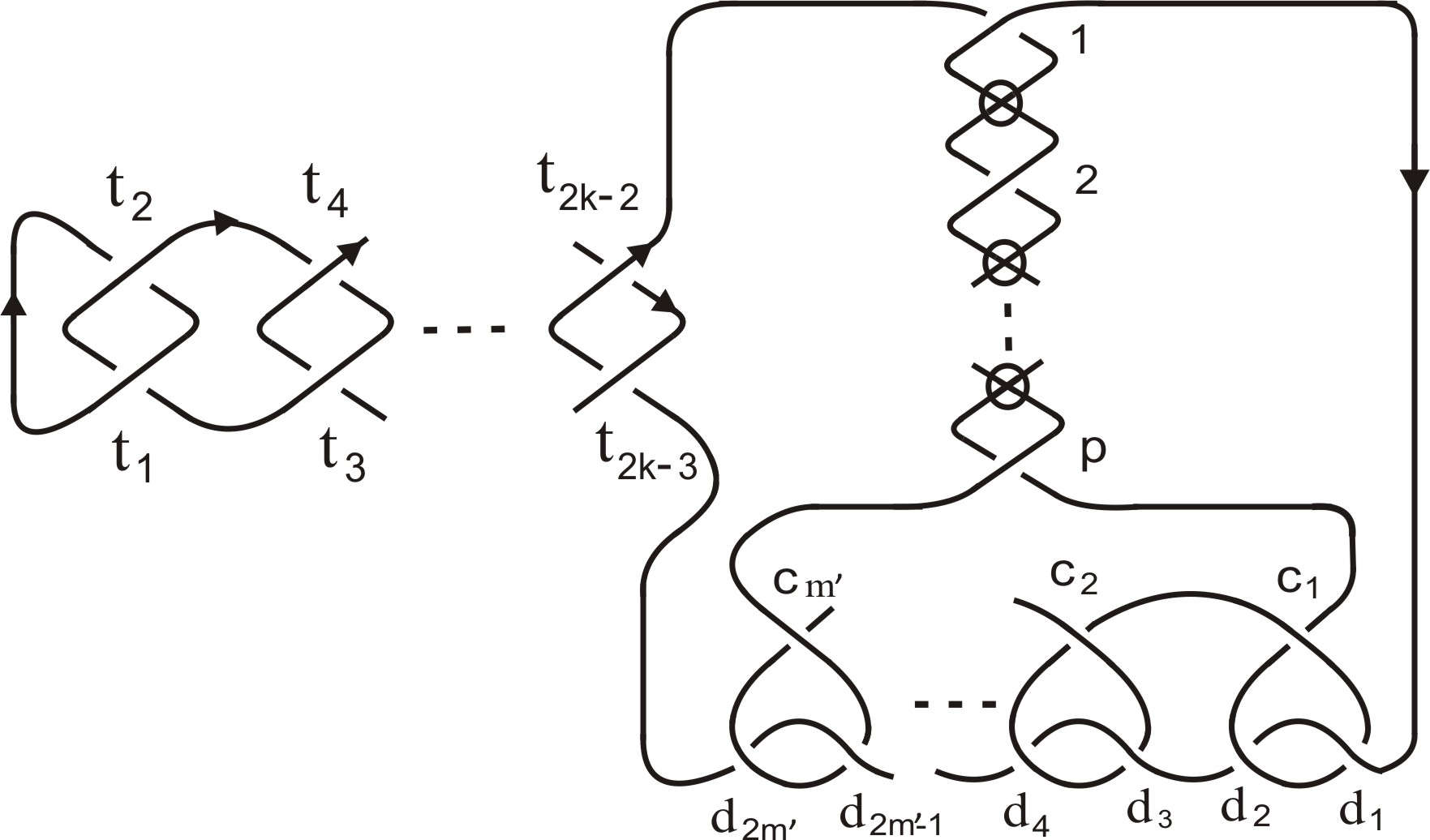}
\caption{ Virtual link diagram $D=D_1\cup D_2\cup \cdots \cup D_k$. }\label{(0,m)}
 } \end{figure}
 
\begin{proposition}\label{prop1:(n,m)link}
  Let $n,m,k\in \mathbb{N}$ with $k>1$. Then there exist infinite families of $k$-component virtual links with unknotting index $(n,0)$ and $(0,m)$, respectively.
\end{proposition}
\begin{proof}
 Let $n,m, k, p \in \mathbb{N}$ with $k>1$ and odd $p \geq 2$. First we establish the result for the pair $(0,m)$, by considering case $k\leq  n$.\\
{\underline{\it Case 1.}} Suppose that $k \leq n$. Consider a virtual link $L$ presented by diagram $D=D_1\cup D_2\cup \ldots \cup D_k$, as shown in Fig.~\ref{(0,m)}, and the diagram $D_k=K^{m',p}$, as shown in Fig.~\ref{figrf1}, with $m'=(m-k+1)$. Let $t_1,t_2,\ldots,t_{2k-2}$ be the linking crossings of $D$ as shown in Fig.~\ref{(0,m)}. It is easy to see that there is no linking crossing between $D_i$ and $D_j$ when $j\neq i+1$. For $1\leq i \leq k-1 $, there are two linking crossings, crossing $t_i$ and $t_{2i}$, between $D_i$ and $D_{i+1}$ and 
\[\operatorname{sgn}(t_i)=\operatorname{sgn}(t_{2i})=-1.\] 
It is evident that $\operatorname{span}(D_i \cup D_{i+1})=0$ for $1\leq i \leq k-1 $, and 
\begin{align*}
\operatorname{span}(D)&=\displaystyle \sum_{i\neq j} \operatorname{span}(D_i \cup D_{j})=\displaystyle \sum_{i=1}^{k-1} \operatorname{span}(D_i \cup D_{i+1})=\displaystyle \sum_{i=1}^{k-1} 0=0 .
\end{align*}

\noindent  Since  $\operatorname{span}(D)=0$, by definition $\ell_{D_{i} \cup D_{j}}=\operatorname{lk}(D_i \cup D_{j}),$ and 
\begin{align*}\displaystyle \sum_{i \neq j} \ell_{D_{i} \cup D_{j}}&=\displaystyle \sum_{i \neq j}|\operatorname{lk}(D_i \cup D_j)|=\displaystyle \sum_{i=1}^{k-1} |\operatorname{lk}(D_i \cup D_{i+1})|\\
&=\displaystyle \sum_{i=1}^{k-1}\left|\dfrac{1}{2}(\operatorname{sgn}(t_{2i-1})+\operatorname{sgn}(t_{2i}))\right|=\displaystyle \sum_{i=1}^{k-1}{|-1|}=(k-1).
\end{align*}
Since $D_i,~i\neq k$ is a diagram of the trivial knot and $D_k=K^{m',p}$, 
\[\displaystyle\sum_{N \in \mathbb{Z}\setminus \{0\}} \abs{ J_N(D_i)}=0 \text{~for} ~i\neq k, \text{~and using equation~(\ref{Eq0(0,m)}), we have}\]
\[ \sum_{i=1}^{k} \sum_{N \in \mathbb{Z}\setminus \{0\}} |  J_N(D_{i}) |=\sum_{N \in \mathbb{Z}\setminus \{0\}} \abs{ J_N(D_k)}=\sum_{N \in \mathbb{Z}\setminus \{0\}} \abs{ J_N(K^{m',p})}=2m'=2(m-k+1).\]
By using Theorem~\ref{thm:bound_links}, we have 
\begin{align*} \hspace{2cm} U(L) &\geq \Big(\operatorname{span}(D), \displaystyle \sum_{i \neq j} \ell_{D_{i} \cup D_{j}} + \frac{1}{2} \sum_{i=1}^{k} \sum_{N \in \mathbb{Z}\setminus \{0\}} |  J_N(D_{i}) | \Big)\\
(20)\hspace{2.6cm}& =\left(0,(k-1)+ \frac{1}{2} 2(m-k+1) \right)=(0,m). \hspace{2.6cm}
\end{align*}

\noindent For upper bound, by applying $m$ crossing change operations at $t_2, t_4,\ldots,t_{2k-2}$ and $d_2,d_4,\ldots,d_{2m'}$ crossings in the diagram $D$ as shown in Fig.~\ref{(0,m)}, the resulting diagram becomes a diagram of the trivial link. Hence, 
\[(21)\hspace{4cm} U(L)\leq U(D)\leq (0,m).\hspace{4cm} \]
Equation~(20) and equation~(21) implies that $U(L)=(0,m).$ \\
As discussed in the proof of Theorem~\ref{thm:(0,m)},  the virtual knot diagrams $K^{m',p}$, for different values of $p\geq 2$, represent non-equivalent virtual knots. Thus different values of $p\geq 2$, yields distinct $k$-component virtual links whose unknotting index is $(0,m).$\\
 Moreover, there exist infinitely many odd integers $p\geq 2$. Thus, there exist infinitely many $k$-component virtual links with unknotting index $(0,m)$.\\

\begin{figure}[!ht] 
{\centering
\includegraphics[scale=0.55]{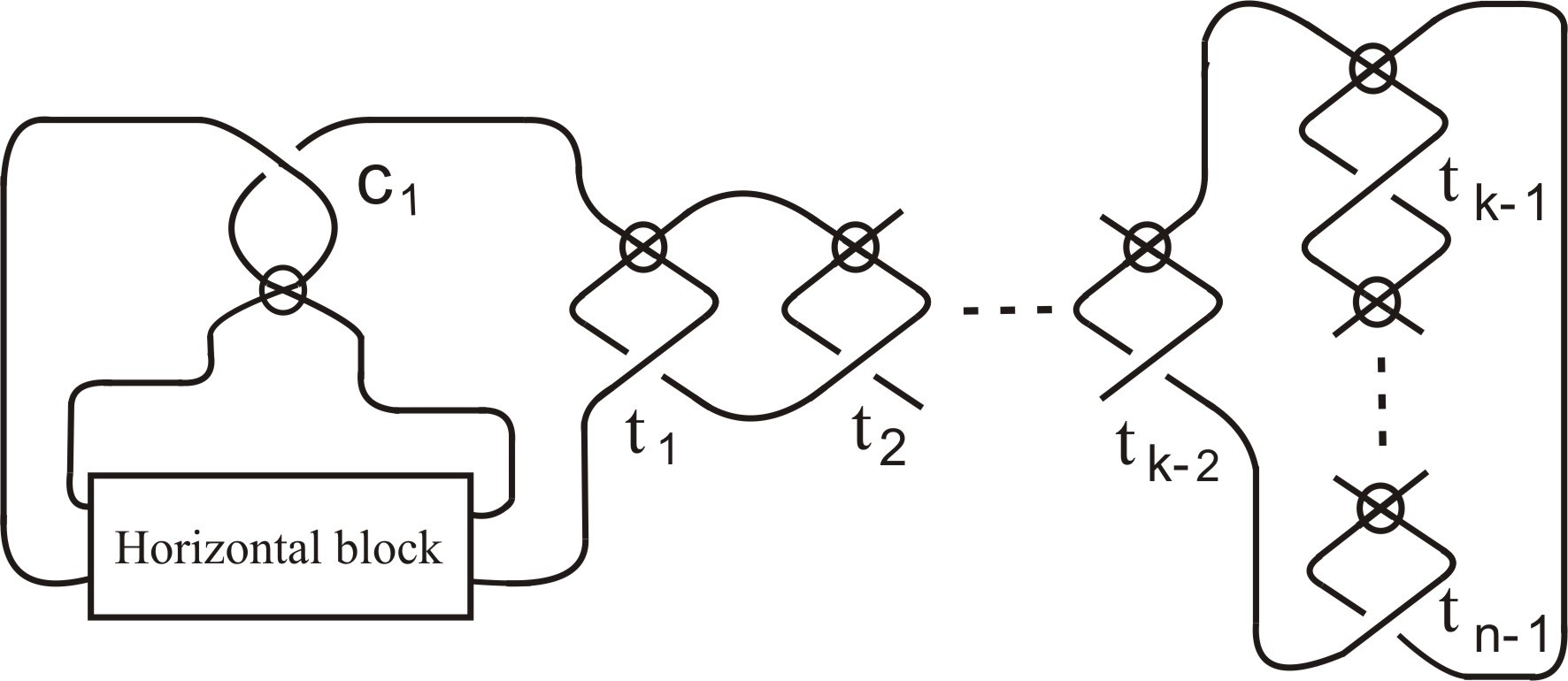}
\caption{ Virtual link diagram $D=D_1\cup D_2\cup \cdots \cup D_k$, where horizontal block contains crossings same as the crossings in the horizontal block of $D^{1,p}_{q,0}$. }\label{(n,0)}
 } \end{figure}
\noindent {\underline{\it Case 2.}} In this case, we establish the result for the pair $(n,0)$, by considering case $k\leq  n$. Consider a virtual link $L$ presented by the diagram  $D=D_1\cup D_2\cup\cdots\cup D_k$ as shown in Fig.~\ref{(n,0)},  with $D_1=D^{1,p}_{q,0}$ for any $q$ satisfying $p>q>3$.  Let $t_1,t_2,\ldots,t_{n-1}$ be the linking crossings of $D$ as shown in Fig.~\ref{(n,0)}. It is easy to see that there is no linking crossing between $D_i$ and $D_j$ when $j\neq i+1$. For $1\leq i \leq k-2 $, there is only one linking crossing, crossing $t_i$, between $D_i$ and $D_{i+1}$. Note that $t_{k-1},t_{k},\ldots, t_{n-1}$ are linking crossings between $D_{k-1}$ and $D_k$. Thus it is easy to compute the span value of the diagram $D$ as
\begin{align*}
\operatorname{span}(D)&=\displaystyle \sum_{i\neq j} \operatorname{span}(D_i \cup D_{j})=\displaystyle \sum_{i=1}^{k-1} \operatorname{span}(D_i \cup D_{i+1})\\
&=\displaystyle \sum_{i=1}^{k-2} \operatorname{span}(D_i \cup D_{i+1})+\operatorname{span}(D_{k-1} \cup D_{k})\\
&=\left(\displaystyle \sum_{i=1}^{k-2} 1 \right)+(n-k+1)=n-1.
\end{align*}

\noindent By using Remark~\ref{rem:span_inv}, minimum $(n-1)$ number of linking crossings are required to virtualize in $D$ to deform $D$ into a split link. Additionally, in the diagram $D_1=D^{1,p}_{q,0}$, $q>2$. Then by Lemma~\ref{lem-(n,m)} and Remark~\ref{rmk:(n,m)knot}, $U(K)\geq (1,0)$, where $K$ is the virtual knot corresponding to $D^{1,p}_{q,0}.$ That means minimum one crossing need to be  virtualized in  $D_1=D^{1,p}_{q,0}$ to deform $D_k=D^{1,p}_{q,0}$ into the trivial knot. Thus using Remark~\ref{rem:span_inv}, minimum $\operatorname{span}(D)+1=n$ number of crossings are required to be virtualized in $D$ to deform  $D$ into a trivial link. Since span value is a virtual link invariant, we have  
\[(22)\hspace{5cm} U(L)>(n,0).\hspace{5cm}\]
\noindent For upper bound, by virtualizing all $(n-1)$ linking crossings in $D$ and crossing $c_1$, the resulting diagram becomes a diagram of the trivial link. Hence 
\[(23)\hspace{4cm} U(L)\leq U(D)\leq (n,0).\hspace{4cm} \]
Equation~(22)  and equation~(23) implies that $U(L)=(n,0).$ \\
As discussed in the proof of Theorem~\ref{thm:(1,0)knot},   the virtual knot diagrams $D_1=D^{1,p}_{q,0}$ for a fixed $q>3$, and different values of odd $p>q$, represent non-equivalent virtual knots. Thus different values of odd $p$ satisfying $p>q>3$, yields distinct $k$-component virtual links whose unknotting index is $(n,0).$\\
 Moreover, there exist infinitely many odd positive integers satisfying $p>q>3$, for a fixed $q$. Thus, there exist infinitely many $k$-component virtual links with unknotting index $(n,0)$.\\

\noindent {\underline{\it Case 3.}} In this case, we establish the result for the pairs $(0,m)$ and $(n,0)$, by considering case $k> n$.  Take any $k'$ such that $k'\leq n$. Then the result holds for $k'$ as discussed in {\it Case 1}, and  {\it Case 2} of Theorem~\ref{prop1:(n,m)link}. Let $L'_{1}$ and $L'_{2}$ be the $k'$- component virtual links with unknotting index $(0,m)$ and $(n,0)$ as shown in Fig.~\ref{(0,m)}, and Fig.~\ref{(n,0)} with $q>3$, respectively. Consider a $k$-component virtual link, say $L_i,~i\in\{1,2\}$, which is a disjoint union of $L'_{i}$ and $(k-k')$ trivial knots. Thus $L_{1}$ and $L_2$ are  our desired $k$-component virtual links. We can construct infinitely many $k$-component virtual links with unknotting index $(0,m)$ and $(n,0)$, respectively. Hence the proof of the result follows. 
\end{proof}
\section{Conclusion}\label{conclusion}

\noindent In this paper, we address the challenge posed by K. Kaur et al. in \cite{kaur2019unknottingknot} concerning the existence of virtual knots with unknotting index $(n,m)$. We establish  infinite families of $k(>1)$-component virtual links with unknotting index $(n,m)$, for any given pair of non-negative integers $(n,m)$. However, for $k=1$, proving the existence of a virtual knot with unknotting index $(n,m)$ remains difficult.
Specifically, we identify infinite families of virtual knots with unknotting indices $(0,m)$ and $(1,0)$, respectively. For any pair $(n,m)$ of positive integers, we construct infinitely many distinct virtual knot diagrams with unknotting index $(n,m)$.

\noindent Determining the unknotting index of the virtual knots corresponding to the diagrams constructed in Theorem~\ref{thm:(n,m)knots} presents a challenging problem.    If we consider $n=1$ and $m=0$, then the diagram constructed in Theorem~\ref{thm:(n,m)knots} coincides with the diagram $D^{1,p}_{q,0}$ constructed in Theorem~\ref{thm:(1,0)knot}. By Theorem~\ref{thm:(1,0)knot}, $U(K)=U(D^{1,p}_{q,0})=U(D)=(1,0)$, where $K$ is the virtual knot corresponding to the diagram $D$. If we consider $n=0$ and $m\in \mathbb{N}$, then the diagram $D$ constructed in Theorem~\ref{thm:(n,m)knots} is equivalent to the diagram  shown in Fig.~\ref{mnu2}. By Remark~\ref{rmk:(n,m)knot}, $U(K)=U(D)=(0,m)$, where $K$ is the virtual knot corresponding to the diagram $D$.\\
\noindent A natural question arises: For a virtual knot $K$ represented by the diagram $D$ constructed in Theorem 3.7, does  $U(K)=U(D)$ hold for any pair of positive integers $(n,m)?$\\

\noindent \section{Acknowledgment}

\noindent The authors would like to thank Prof. Jitender Singh for his remarks and suggestions during  the preparation of this paper. First author was supported by  National Board of Higher Mathematics (NBHM)(Ref. No. 0204/16(11)/2022/R $\&$D-II/11983), Government of India. Second author acknowledges the support given by the Science and Engineering Board (SERB), Department of Science and Technology, Government of India Under the Mathematical Research Impact Centric Support (MATRICS) grant-in-aid with F.No.MTR/2021/00394 and by the NBHM, Government of India under grant-in-aid with F.No.02011/2/ 20223 NBHM(R.P.)/R$\&$D II/970.

\end{document}